\providecommand{\U}[1]{\protect \rule{.1in}{.1in}}
\theoremstyle{plain}
\newtheorem{corollary}{Corollary}
\newtheorem{definition}{Definition}
\newtheorem{example}{Example}
\newtheorem{lemma}{Lemma}
\newtheorem{proposition}{Proposition}
\newtheorem{remark}{Remark}
\newtheorem{theorem}{Theorem}
\numberwithin{equation}{section}
\begin{document}
\title[Additive maps on $C\left(  X\right)  $]{Additive Local Multiplications and zero-preserving maps on $C\left(  X\right)
$}
\author{Qian Hu}
\address{East China University of Science and Technology, Shanghai, China}
\email{hu1267@126.com}
\subjclass[2000]{Primary 47B48, 54C45; Secondary 54D30, 16S99}
\keywords{Local multiplication, zero-preserving maps, F-space, q-point, P-point}
\dedicatory{Dedicated to Fuying Zhang, my primary school math teacher}
\begin{abstract}
Suppose $X$ is a compact Hausdorff space. In terms of topolocical properties
of $X$, we find topological conditions on $X$ that are equivalent to each of
the following: 1. every additive local multiplication on $C\left(  X\right)  $
is a multiplication, 2. every additive local multiplication on $C_{R}\left(
X\right)  $ is a multiplication, and 3. every additive map on $C\left(
X\right)  $ that is zero-preserving (i.e., $f\left(  x\right)  =0$ implies
$\left(  Tf\right)  \left(  x\right)  =0$) has the form $T\left(  f\right)
=T\left(  1\right)  \operatorname{Re}f+T\left(  i\right)  \operatorname{Im}f$.

\end{abstract}
\maketitle

\section{Introduction}

Suppose $X$ is a topological space. Let $C\left(  X\right)  $ and
$C_{R}\left(  X\right)  $ be the set of all complex continuous functions and
real continuous functions on $X$, respectively. This paper studies local
multiplications and zero-preserving maps on the algebra $C\left(  X\right)  $
and $C_{R}\left(  X\right)  $ when $X$ is a compact Hausdorff space. We find
an interesting interplay between the algebraic or linear-algebraic conditions
and unusual topological properties of the space $X$.

Suppose $\mathcal{A}$ is a ring with identity. A map $T$ on $\mathcal{A}$ is a
\textbf{local multiplication} if, for each $x\in$ $\mathcal{A}$, there is an
$a_{x}\in \mathcal{A}$ such that
\[
T\left(  x\right)  =a_{x}x\text{ (}xa_{x}\text{)}.
\]
The map $T$ is a \textbf{left} (\textbf{right}) \textbf{multiplication} if
there is an $a\in \mathcal{A}$ such that, for every $x\in \mathcal{A}$,
\[
T\left(  x\right)  =ax\text{ (}xa\text{).}%
\]
In this case we must have%
\[
a=T\left(  1\right)  \text{,}%
\]
and we write
\[
T=L_{T\left(  1\right)  }\text{ (}R_{T\left(  1\right)  }\text{)},
\]
i.e., left (right) multiplication by the element $T\left(  1\right)  $. When
that algebra $\mathcal{A}$ is commutative, there is no difference between left
and right, and we write $M_{T\left(  1\right)  }$ for $L_{T\left(  1\right)
}$. A map $T$ on $\mathcal{A}$ is an \textbf{additive map} if for every $x$
and $y$ in $\mathcal{A}$, $T\left(  x+y\right)  =T\left(  x\right)  +T\left(
y\right)  $.

There has been a lot of work characterizing cases in which every local
multiplication of a certain type is a multiplication. In 1983 the paper
\cite{H} of D. Hadwin contains an early result on local multiplications. In
1994 the so-called "Hadwin Lunch Bunch" \cite{HLB} gave necessary conditions
for local multiplications in rings with many idempotents to be multiplications.

In $1997$ D. Hadwin and J. W. Kerr \cite{HK} studied $\mathcal{R}$-linear
local multiplications on an algebra $\mathcal{A}$ over commutative ring
$\mathcal{R}$ with identity, and gave conditions that implied that every local
multiplication on $\mathcal{A}$ is a multiplication. When $\mathcal{R}$ is the
ring $\mathbb{Z}$ of integers, the $\mathcal{R}$-linear maps are simply the
additive maps. When $\mathcal{A}$ is a vector space over the rational numbers
$\mathbb{Q}$, the additive maps are precisely the $\mathbb{Q}$-linear maps.
When $\mathcal{A}$ is a topological vector space over $\mathbb{R}$, the
continuous additive maps are precisely the $\mathbb{R}$-linear ones. In
\cite{HK} D. Hadwin and J. W. Kerr proved, for a large class of unital
C*-algebras every additive local left (right) multiplication is a left (right)
multiplication. These algebras include ones for which the set of
finite-dimensional (dimension greater than $1$) separate the points of the
algebra. However, for additive maps, the commutative algebras, i.e., ones for
which every irreducible representation is $1$-dimensional, were not
considered. A unital C*-algebra is commutative if and only if it is isomorphic
to $C\left(  X\right)  $ for some compact Hausdorff space $X$. Note that
Hadwin and Kerr's results imply that every local left (right) multiplication
on the algebra $\mathbb{M}_{2}\left(  C\left(  X\right)  \right)  $ of
$2\times2$ matrices over $C\left(  X\right)  $ is a left (right) multiplication.

There is a vast literature on local derivations and local automorphisms, e.g.,
\cite{Chr}, \cite{Cr}, \cite{HL1}, \cite{HL2}, \cite{Han}, \cite{K}, \cite{S},
\cite{W}.

Another related active area of research is the study of maps that preserve a
particular property. According to MATHSCINET there have been almost $200$
papers studying linear preservers and at least $48$ papers studying additive
maps that preserve some property.

In this paper we restrict ourselves to the case where our algebra is the space
$C\left(  X\right)  $ of complex continuous functions on a compact Hausdorff
space $X$. It was shown in \cite[Theorem 6]{H} that every $\mathbb{C}$-linear
map on $C\left(  X\right)  $ that is a local multiplication is a
multiplication. We study additive or $\mathbb{R}$-linear local multiplications
on $C\left(  X\right)  $. We also study maps $T$ that are
\textbf{zero-preserving}, i.e., satisfying, for every $f\in C\left(  X\right)
$ and every $x\in X$,%
\[
f\left(  x\right)  =0\text{ implies }\left(  Tf\right)  \left(  x\right)  =0.
\]

In Section $2$ we consider those compact Hausdorff spaces $X$ for which every
additive local multiplication on $C\left(  X\right)  $ must be a
multiplication, and we call such $X$ an $\eta$\textbf{-space}. If every local
multiplication on $C_{R}\left(  X\right)  $ is a multiplication, we can $X$ a
\textbf{real} $\eta$\textbf{-space}. There is a vast difference between these
two concepts. We prove (Theorem \ref{slp}) that if the set of points $x\in X$
that are a limit of a \textbf{sequence} in $X\backslash \left \{  x\right \}  $
is dense in $X$, then $X$ is an $\eta$-space. In particular, if $X$ is first
countable, then $X$ is an $\eta$-space if and only if $X$ has no isolated
points. We also prove (Theorem \ref{union}) that the closure of the union of a
collection of $\eta$-subspaces of $X$ is also an $\eta$-space. Hence every
compact Hausdorff space has a unique maximal compact subspace that is an
$\eta$-space. It follows (Theorem \ref{product}) that the Cartesian product of
an $\eta$-space with any compact Hausdorff space is an $\eta$-space. We also
construct many spaces that are not $\eta$-spaces. The conjugation map on
$C\left(  X\right)  $ is $\mathbb{R}$-linear and is never a multiplication. We
prove (Theorem \ref{F}) that the conjugation map is a local multiplication on
$C\left(  X\right)  $ if and only if $X$ is an F-space in the sense of L.
Gillman and M. Jerison \cite{GJ}; this is also equivalent to the set of
$\mathbb{R}$-linear local multiplications on $C\left(  X\right)  $ being
precisely the maps of the form%
\[
T\left(  f\right)  =T\left(  1\right)  \operatorname{Re}\left(  f\right)
+T\left(  i\right)  \operatorname{Im}\left(  f\right)  \text{.}%
\]
We also prove (Theorem \ref{gfbar}) that there is a nonzero function $g$ such
that the map $T\left(  f\right)  =g\bar{f}$ is a local multiplication if and
only if there is a nonempty open F$_{\sigma}$-subset (i.e., a countable union
of closed sets) of $X$ that is an F-space. As a consequence, we prove
(Corollary \ref{rl}) that if no nonempty open F$_{\sigma}$-subset of $X$ is an
F-space, then every $\mathbb{R}$-linear (or continuous additive) local
multiplication on $C\left(  X\right)  $ is a multiplication. We also
characterize (Proposition \ref{betan}) the additive local multiplications on
$\ell^{\infty}=C\left(  \beta \left(  \mathbb{N}\right)  \right)  $, where
$\beta \left(  \mathbb{N}\right)  $ is the Stone-\v{C}ech compactification of
$\mathbb{N}$.

In Section $3$ we focus on the additive maps on $C\left(  X\right)  $ that are
zero-preserving, i.e., for every $f\in C\left(  X\right)  $ and for every
$x\in X$,%
\[
f\left(  x\right)  =0\Rightarrow T\left(  f\right)  \left(  x\right)
=0\text{.}%
\]
Since the conjugation map satisfies this property, we can't expect all of
these maps to be multiplications. But we can hope for them to be of the form%
\[
T\left(  f\right)  =T\left(  1\right)  \operatorname{Re}\left(  f\right)
+T\left(  i\right)  \operatorname{Im}\left(  f\right)  .
\]
We call a space $X$ for which every additive zero-preserving map has the above
form an $\upsilon$\textbf{-space}. We show (Theorem \ref{uCharacterization})
that this property is equivalent to every additive zero-preserving map on
$C_{R}\left(  X\right)  ,$ the set of real-valued continuous functions on $X$,
is a multiplication. Thus every $\upsilon$-space is a real $\eta$-space. This
characterization allows us to carry over results on $\eta$-spaces to those of
$\upsilon$-spaces. In particular we prove (Theorem \ref{fromsec2}) that, if
the set of sequential limit points is dense in $X$ or $X$ is the closure of
the union of a family of compact $\upsilon$-subspaces, or $X$ is the product
of a compact Hausdorff space and an $\upsilon$-space, then $X$ is an
$\upsilon$-space. This also shows that every compact Hausdorff space has a
unique maximal compact subspace that is an $\upsilon$-space. We also show
(Theorem \ref{zpAlg}) that every $\mathbb{R}$-linear zero-preserving map on
$C_{R}\left(  X\right)  $ is a multiplication.

In Section 4 We introduce the notions of $\mathfrak{q}$-point and strong
$\mathfrak{q}$-point, which are generalizations of a sequential limit point.
We prove (Theorem \ref{qp}) that if the set of strong $\mathfrak{q}$-points is
dense, then $X$ is an $\eta$-space, and if the set of $\mathfrak{q}$-points is
dense, then $X$ is an $\upsilon$-space. It turns out (Lemma \ref{pq}) that $x$
is a $q$-point if and only if $x$ is not a P-point in the sense of L. Gillman
and M. Henriksen \cite{GH}, and we show that the set of $\mathfrak{q}$-points
of $X$ is dense if and only if $X$ has no isolated points.

In Section 5, we present our main theorems. Our first main theorem (Theorem
\ref{main}) characterizes $\upsilon$-spaces and real $\eta$-spaces: Suppose
$X$ is a compact Hausdorff space. The following are equivalent:

\begin{enumerate}
\item $X$ is a $\upsilon$-space

\item $X$ is a real $\eta$-space

\item The set of $\mathfrak{q}$-points of $X$ is dense in $X$

\item $X$ has no isolated points.
\end{enumerate}

Our second main theorem (Theorem \ref{main2}) topologically characterizes
$\eta$-spaces: Suppose $X$ is a compact Hausdorff space. Then $X$ is an $\eta
$-space if and only if no nonempty open F$_{\sigma}$ set in $X$ is an F-space.

In Remark \ref{transfinite} we describe how to construct the maximal $\eta
$-subspace and the maximal $\upsilon$-subspace of a compact Hausdorff space
$X$.

We conclude with remarks about $\beta \left(  \mathbb{N}\right)  \backslash
\mathbb{N}$, where $\beta \left(  \mathbb{N}\right)  $ denotes the
Stone-\v{C}ech compactification of the set $\mathbb{N}$ of positive integers.
We have $\beta \left(  \mathbb{N}\right)  \backslash \mathbb{N}$ is an
$\upsilon$-space since $\beta \left(  \mathbb{N}\right)  \backslash \mathbb{N}$
has no isolated points. However, $\beta \left(  \mathbb{N}\right)
\backslash \mathbb{N}$ is not an $\eta$-space, since it is an F-space. We also
remark that W. Rudin \cite{R} and S. Shelah (see \cite{W}) have proved that
the assertion that every point in $\beta \left(  \mathbb{N}\right)
\backslash \mathbb{N}$ is a $q$-point is independent from the axioms of set
theory (ZFC).

For topological notions we refer the reader to \cite{Wil} and \cite{GJ}.

\section{$\eta$-spaces}

Suppose $Y$ is any completely regular Hausdorff space, then $\beta \left(
Y\right)  $ denotes the Stone-\v{C}ech Compactification of $Y$. For any real
number $r$, $[r]$ is the largest integer not more than $r$.

\begin{definition}
Suppose $X$ is a compact Hausdorff space. If every additive local
multiplication on $C\left(  X\right)  $ is a multiplication, then we call $X$
an $\mathbf{\eta}$\textbf{-space}. If every additive local multiplication on
$C_{R}\left(  X\right)  $ is a multiplication, we call $X$ a \textbf{real}
$\eta$\textbf{-space.}
\end{definition}

\begin{example}
Suppose $X=\left \{  a\right \}  $ is a singleton. Thus $C\left(  S\right)
\cong \mathbb{C}$. Every additive map ($\mathbb{Q}$-linear map) on $\mathbb{C}$
is a local multiplication. Given a linear basis $B$ for $\mathbb{C}$ over
$\mathbb{Q}$, we can get $card\left(  B\right)  =card\left(  \mathbb{R}%
\right)  $. Thus the cardinality of the set of all $\mathbb{Q}$-linear (i.e.,
additive) maps on $\mathbb{C}$ is $card\left(  B\right)  ^{card\left(
\mathbb{R}\right)  }=2^{\left(  2^{\aleph_{0}}\right)  }$. But the cardinality
of the set of all multiplications on $\mathbb{C}$ is $2^{\aleph_{0}}$. Thus no
singleton is an $\eta$-space.
\end{example}

\begin{definition}
Suppose $\{Y_{i}\}$ is a family of topology spaces. Let $Y$ be the disjoint
union of the $Y_{i}$'s (If there are two sets intersecting, then let $Y_{i}$
be $Y_{i}\times \left \{  i\right \}  $). Define a subset $U$ of $Y$ to be open
if and only if the intersection of $U$ and each $Y_{i}$ is open in $Y_{i}$.
Thus we defined the \textbf{disjoint union topology} on $Y$.
\end{definition}

\begin{theorem}
\label{direct}Suppose $X$ is the disjoint union of compact Hausdorff spaces
$Y$ and $Z$. Then $X$ is an $\eta$-space if and only if $Y$ and $Z$ are $\eta$-spaces.
\end{theorem}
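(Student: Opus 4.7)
The plan is to exploit the fact that $Y$ and $Z$ are clopen in the disjoint union $X$, so that $\chi_Y$ and $\chi_Z$ are central idempotents in $C(X)$ summing to $1$ and restriction gives a ring isomorphism $C(X) \cong C(Y) \oplus C(Z)$. Throughout I will identify $f \in C(X)$ with the pair $(f|_Y, f|_Z)$, and conversely identify $(g,h) \in C(Y) \oplus C(Z)$ with the function on $X$ equal to $g$ on $Y$ and $h$ on $Z$.

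For the ``only if'' direction I will show that if $X$ is an $\eta$-space then so is $Y$, the argument for $Z$ being identical. Given an additive local multiplication $T_0$ on $C(Y)$, I will extend it by zero to a map $\tilde T \colon C(X) \to C(X)$ defined by $\tilde T(g,h) = (T_0 g,\, 0)$. Additivity is automatic, and if $a_g \in C(Y)$ implements the local multiplication for $T_0$ at $g$, then $(a_g, 0) \in C(X)$ implements it for $\tilde T$ at $(g,h)$, so $\tilde T$ is an additive local multiplication. By the hypothesis on $X$, $\tilde T = M_B$ for some $B = (B_1, B_2) \in C(X)$; taking $h=0$ and reading off the $Y$-component yields $T_0 = M_{B_1}$, so $Y$ is an $\eta$-space.

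For the ``if'' direction, assume that $Y$ and $Z$ are both $\eta$-spaces and let $T$ be an additive local multiplication on $C(X)$. The key observation is that $T$ respects the two clopen ideals: for any $(g,0)$, the local equation $T(g,0) = a \cdot (g,0)$ forces $T(g,0)$ to vanish on $Z$, and symmetrically for functions of the form $(0,h)$. Hence $T$ induces maps $T_Y$ on $C(Y)$ and $T_Z$ on $C(Z)$ via $T(g,0) = (T_Y g,\, 0)$ and $T(0,h) = (0,\, T_Z h)$; each inherits additivity from $T$, and each inherits the local-multiplication property by restricting the global multiplier $a$ to the appropriate clopen piece. By the hypotheses on $Y$ and $Z$, there exist $\alpha \in C(Y)$ and $\beta \in C(Z)$ with $T_Y = M_\alpha$ and $T_Z = M_\beta$. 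Finally, decomposing $(g,h) = (g,0) + (0,h)$ and invoking additivity gives
\[
T(g,h) = T(g,0) + T(0,h) = (\alpha g,\, \beta h) = (\alpha,\beta)\cdot(g,h),
\]
so $T = M_{(\alpha,\beta)}$ is a multiplication.

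I do not anticipate any serious obstacle: the entire argument is driven by the central-idempotent decomposition $1 = \chi_Y + \chi_Z$ together with the trivial observation that a local multiplier can be restricted to a clopen set. The only point requiring a moment's care is verifying that the induced maps $T_Y$ and $T_Z$ still satisfy the local-multiplication property with multipliers in $C(Y)$ and $C(Z)$ respectively, which is immediate from clopenness.
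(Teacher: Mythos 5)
Your proposal is correct and follows essentially the same route as the paper, which simply invokes the decomposition $C(X)\cong C(Y)\oplus C(Z)$ and the fact that a map is a local multiplication on $C(X)$ if and only if it is a direct sum of local multiplications on the summands. You have merely filled in the details the paper leaves implicit (the extension-by-zero for the forward direction and the ideal-preservation argument showing $T$ splits for the converse), and these details are all sound.
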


\begin{proof}
We have $C\left(  X\right)  $ is isomorphic to the direct sum of $C\left(
Y\right)  $ and $C\left(  Z\right)  $. A map $T$ is a local multiplication on
$C\left(  X\right)  $ if and only if $T$ is the direct sum of local
multiplications on $C\left(  Y\right)  $ and $C\left(  Z\right)  $.
\end{proof}

\begin{corollary}
An $\eta$-space has no isolated points.
\end{corollary}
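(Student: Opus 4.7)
The plan is to combine Theorem \ref{direct} with the preceding example to get a direct proof by contradiction. Suppose, toward a contradiction, that $X$ is an $\eta$-space and $x\in X$ is an isolated point.

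First, I would observe that because $\{x\}$ is open in $X$ and $X$ is compact Hausdorff, the complement $X\setminus\{x\}$ is a closed (hence compact) subset, and $X$ is the topological disjoint union of the two compact Hausdorff spaces $\{x\}$ and $X\setminus\{x\}$. The disjoint union topology agrees with the subspace topology here precisely because $\{x\}$ is clopen.

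Next, I would invoke Theorem \ref{direct}: since $X$ is an $\eta$-space and $X$ is the disjoint union of $\{x\}$ and $X\setminus\{x\}$, each summand must itself be an $\eta$-space. In particular the singleton $\{x\}$ is an $\eta$-space. But the preceding example shows that no singleton is an $\eta$-space, since $C(\{x\})\cong\mathbb{C}$ admits $2^{(2^{\aleph_{0}})}$ additive local multiplications while only $2^{\aleph_{0}}$ multiplications. This contradiction forces $X$ to have no isolated points.

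There is essentially no obstacle here; the work has already been done in Theorem \ref{direct} and the singleton example, and the corollary is just their conjunction. The only mild care needed is verifying that an isolated point genuinely yields a clopen decomposition into two compact Hausdorff pieces, which is immediate in the compact Hausdorff setting.
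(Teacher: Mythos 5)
Your proposal is correct and is essentially the paper's own argument: decompose $X$ as the disjoint union of the clopen singleton $\{x\}$ and its complement, apply Theorem \ref{direct}, and invoke the example showing no singleton is an $\eta$-space. The extra care you take in verifying the clopen decomposition is fine but adds nothing beyond what the paper already does.
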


\begin{proof}
If an $\eta$-space $X$ has an isolated point $x$, then $X$ is the disjoint
union of $\left \{  x\right \}  $ and $X\backslash \left \{  x\right \}  $. Since
$\left \{  x\right \}  $ is not an $\eta$-space, from Theorem \ref{direct} it
follows that $X$ is not an $\eta$-space.
\end{proof}

\begin{definition}
Suppose $X$ is a topological space and $x\in X$. If there is a sequence
$\left \{  x_{n}\right \}  $ in $X\backslash \left \{  x\right \}  $ such that
$x_{n}\rightarrow x$, then we call $x$ a \textbf{sequential limit point} of
$X$.
\end{definition}

Note that if $X$ is a $T_{1}$ space, then $x\in X$ is a sequential limit point
if and only if there is a sequence$\left \{  x_{n}\right \}  $ in $X\backslash
\left \{  x\right \}  $ whose terms are different from each other such that
$x_{n}\rightarrow x$.

\begin{theorem}
\label{slp}Suppose $X$ is a compact Hausdorff space, and let $A$ be the set of
all sequential limit points of $X$. If $X=\bar{A}$, then $X$ is an $\eta$-space.
\end{theorem}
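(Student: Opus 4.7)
The plan is to show every additive local multiplication $T$ on $C(X)$ equals $M_{T(1)}$. Writing $T(f)=a_f f$ with $a_f\in C(X)$, the map $T$ is automatically zero-preserving, and combined with additivity this gives $T(f)(x)=T(g)(x)$ whenever $f(x)=g(x)$, so $T(f)(x)$ depends only on $f(x)$. Letting $\varphi_c\in C(X)$ denote the constant function $c$ and setting $\Psi_x(c)=T(\varphi_c)(x)$, we therefore have $T(f)(x)=\Psi_x(f(x))$ for all $f,x$. Each $\Psi_x:\mathbb{C}\to\mathbb{C}$ is additive (hence $\mathbb{Q}$-linear); the assignment $x\mapsto\Psi_x(c)$ is the continuous function $T(\varphi_c)\in C(X)$; and $\Psi_x(1)=T(1)(x)$. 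The goal reduces to showing $\Psi_x(c)=cT(1)(x)$ for all $x\in X$ and $c\in\mathbb{C}$.

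The heart of the argument is a Tietze-extension construction carried out at each sequential limit point $x\in A$. Choose distinct points $y_n\in X\setminus\{x\}$ with $y_n\to x$ and fix $c\in\mathbb{C}\setminus\{0\}$. The subspace $\{x\}\cup\{y_n:n\ge 1\}$ is closed in $X$ and each $y_n$ is isolated in it, so the assignment
\[
x\mapsto 0,\qquad y_{2n}\mapsto \tfrac{1}{n},\qquad y_{2n+1}\mapsto \tfrac{c}{n}
\]
is continuous there (all values tend to $0$) and extends by Tietze to some $f\in C(X)$. Let $a_f\in C(X)$ satisfy $T(f)=a_f f$. Using the $\mathbb{Q}$-linearity of $\Psi_{y_n}$ applied to the rational factor $1/n$,
\[
a_f(y_{2n})=\frac{\Psi_{y_{2n}}(1/n)}{1/n}=\Psi_{y_{2n}}(1)=T(1)(y_{2n}),\qquad a_f(y_{2n+1})=\frac{\Psi_{y_{2n+1}}(c/n)}{c/n}=\frac{\Psi_{y_{2n+1}}(c)}{c}.
\]
Continuity of $a_f$ at $x$, together with the continuity of $T(1)$ and of $y\mapsto\Psi_y(c)=T(\varphi_c)(y)$, forces the common limit $a_f(x)$ to equal both $T(1)(x)$ and $\Psi_x(c)/c$, whence $\Psi_x(c)=cT(1)(x)$.

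This identity therefore holds on the dense set $A$, and since both sides are continuous in $x$ it extends to all of $X=\bar{A}$. Hence $T(f)(x)=f(x)T(1)(x)$ for every $f,x$, proving $T=M_{T(1)}$ and that $X$ is an $\eta$-space. The only step that requires ingenuity is the interlaced test function: the values $1/n$ and $c/n$ both go to $0$ so that the map is continuous and Tietze-extendable, yet their ratio is exactly $c$, which is what survives after dividing $T(f)$ by $f$. A \emph{single} continuous $a_f$ must then reconcile $T(1)(y_{2n})$ with $\Psi_{y_{2n+1}}(c)/c$ at their common limit $x$, which lets us upgrade the mere $\mathbb{Q}$-linearity of $\Psi_x$ to full $\mathbb{C}$-linearity without ever analyzing $\mathbb{Q}$-bases of $\mathbb{C}$ or dealing with pathological additive maps on $\mathbb{C}$.
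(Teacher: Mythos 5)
Your proof is correct, and it is organized genuinely differently from the paper's. Both arguments rest on the same underlying device --- at a sequential limit point $x$, Tietze-extend a function prescribed on the closed set $\left\{x\right\}\cup\left\{y_n\right\}$ and exploit the fact that a \emph{single} continuous multiplier $a_f$ must reconcile, at the limit $x$, the values it is forced to take along two interlaced subsequences --- but you deploy it once, uniformly in $c\in\mathbb{C}$, whereas the paper splits the work into three stages. The paper first proves $T(a)=T(1)\,a$ for real constants $a$ by a separate construction (interpolating the decimal tails $a-[10^{n}a]/10^{n}$ along the sequence, so that $a-F$ takes rational values there and $\mathbb{Q}$-linearity finishes), then reduces arbitrary real-valued functions to constants by the same subtraction trick you use, and only afterward kills $T(i)$ with an interlaced pair of test functions (values $\frac{1}{n}|\sin(n\pi/2)|$ and $\frac{1}{n}$). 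Your observation that additivity plus the zero-preserving property makes $T(f)(x)$ a function $\Psi_x$ of $f(x)$ alone, made \emph{first}, reduces everything to constants at the outset, and then the single interlaced function with values $1/n$ and $c/n$ handles every complex constant at once; this eliminates both the decimal-approximation step and the separate treatment of $i$, at no loss of rigor, since you only ever invoke $\mathbb{Q}$-homogeneity of $\Psi_x$ together with continuity of $a_f$, $T(1)$ and $T(\varphi_c)$. The only blemishes are cosmetic: your indexing leaves $y_1$ unassigned (assign it any value or discard it), and Tietze should be applied to the real and imaginary parts of your complex-valued function separately; neither affects the argument.
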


\begin{proof}
Suppose $T$ is an additive local multiplication on $C\left(  X\right)  $, $T$
is $\mathbb{Q}$-linear. Since the set of local multiplications on $C\left(
X\right)  $ is closed under linear combinations and compositions, $T$ is a
(local) multiplication if and only if\textbf{ }$T-M_{T\left(  1\right)  }$ is
a (local) multiplication. We may suppose $T\left(  1\right)  =0$, and prove
$T=0$.

First we prove $T\left(  a\cdot1\right)  =T\left(  a\right)  =0$ for every
$a\in \mathbb{R}$. Since $T$ is $\mathbb{Q}$-linear, we know, for every
$r\in \mathbb{Q}$, that
\[
T\left(  r\cdot1\right)  =rT\left(  1\right)  =0.
\]
Suppose $a\in \mathbb{R}$. Assume, via contradiction that $T\left(  a\right)
\left(  y\right)  \neq0$ for some $y\in X$. Since $A$ is dense in $X$, there
is an $x\in A$ such that $T\left(  a\right)  \left(  x\right)  \neq0$. Since
$x\in A$, there is a sequence $\left \{  x_{n}\right \}  $ in $X\backslash
\left \{  x\right \}  $ whose terms are different from each other such that
$x_{n}\rightarrow x$. Let $K=\left \{  x_{n}:n\in \mathbb{N}\right \}
\cup \left \{  x\right \}  $. Clearly, $K$ is a closed subset of $X$. Define a
function $f:K\rightarrow \lbrack0,1]$ by
\[
f\left(  x_{n}\right)  =a-\frac{[10^{n}a]}{10^{n}}\text{ for every }%
n\in \mathbb{N}\text{, and }f\left(  x\right)  =0\text{.}%
\]
Clearly, $f$ is continuous on $K$. Since $X$ is compact and Hausdorff, $X$ is
normal. By the Tietze extension theorem, there is a continuous function $F$
from $X$ to $[0,1]$ such that $F|_{K}=f$. Since $F\left(  x\right)  =0$ and
$T$ is a local multiplication, we know that $T\left(  F\right)  \left(
x\right)  =0.$ Thus
\[
T\left(  a-F\right)  \left(  x\right)  =T\left(  a\right)  \left(  x\right)
-T\left(  F\right)  \left(  x\right)
\]%
\[
=T\left(  a\right)  \left(  x\right)  -0=T\left(  a\right)  \left(  x\right)
\neq0\text{,}%
\]
i.e.,%
\[
T\left(  a-F\right)  \left(  x\right)  \neq0\text{.}%
\]
Since $T\left(  a-F\right)  \in C\left(  X\right)  $, and $x_{n}\rightarrow
x$, there exists a $k\in \mathbb{N}$ such that $T\left(  a-F\right)  \left(
x_{k}\right)  \neq0$. But%
\[
\left(  a-F\right)  \left(  x_{k}\right)  =a-f\left(  x_{k}\right)
=\frac{[10^{k}a]}{10^{k}}=r_{k}\in \mathbb{Q}\text{.}%
\]
Thus%
\[
\left(  a-F-r_{k}\right)  \left(  x_{k}\right)  =0.
\]%
\[
T\left(  a-F\right)  \left(  x_{k}\right)  =T\left(  a-F-r_{k}\right)  \left(
x_{k}\right)  =0\text{,}%
\]
since $T\left(  r_{k}\right)  =r_{k}\cdot T\left(  1\right)  =0$. This is a
contradiction. Thus for every $y\in X$, $T\left(  a\right)  \left(  y\right)
=0$, whence $T\left(  a\right)  =0$.

For every $g\in C_{R}\left(  X\right)  $ and every $x\in X$, $\left(
g-g\left(  x\right)  \cdot1\right)  \left(  x\right)  =0,$ and since $T$ is a
local multiplication,%

\[
0=T\left(  g-g\left(  x\right)  \cdot1\right)  \left(  x\right)  =T\left(
g\right)  \left(  x\right)  -T\left(  g\left(  x\right)  \cdot1\right)
=T\left(  g\right)  \left(  x\right)  \text{.}%
\]

Thus $T\left(  g\right)  =0$.

Now suppose $h\in C\left(  X\right)  $, and let $h=h_{1}+ih_{2}$, where
$h_{1}$, $h_{2}\in C_{R}\left(  X\right)  $. We have
\[
T\left(  h\right)  =T\left(  h_{1}\right)  +T\left(  ih_{2}\right)  =T\left(
ih_{2}\right)  =T\left(  i\right)  h_{2}\text{.}%
\]
(Set $L\left(  f\right)  =T\left(  if\right)  $ for every $f\in C\left(
X\right)  $. Thus $L$ is an additive local multiplication on $C\left(
X\right)  $. From above, if $L\left(  1\right)  =0$, then $L\left(  g\right)
=0$ for every $g\in C_{R}\left(  X\right)  $. Thus $L\left(  g\right)
=L\left(  1\right)  g$, i.e., $T\left(  ig\right)  =T\left(  i\right)  g$ for
any $g\in C_{R}\left(  X\right)  $.) To prove $T\left(  h\right)  =0$, it's
enough to show $T\left(  i\right)  =0$.

For every $x\in A$, there is a sequence$\left \{  x_{n}\right \}  $ in
$X\backslash \left \{  x\right \}  $ whose terms are different from each other
such that $x_{n}\rightarrow x$. Let $K=\left \{  x_{n}:n\in \mathbb{N}\right \}
\cup \left \{  x\right \}  $. Then $K$ is a closed subset of $X$. Define a
function $f:K\rightarrow \lbrack0,1]$ by
\[
f\left(  x_{n}\right)  =\frac{1}{n}\cdot|\sin \left(  \frac{n\pi}{2}\right)
|\text{ for every }n\in \mathbb{N}\text{, and }f\left(  x\right)  =0\text{.}%
\]
Thus $f$ is continuous on $K$. By the Tietze extension theorem, there is a
continuous function $F$ from $X$ to $[0,1]$ such that $F|_{K}=f$. Similarly,
define $g:K\rightarrow \lbrack0,1]$ by
\[
g\left(  x_{n}\right)  =\frac{1}{n}\text{ for every }n\in \mathbb{N}\text{, and
}g\left(  x\right)  =0\text{.}%
\]
We get a continuous function $G$ from $X$ to $[0,1]$ such that $G|_{K}=g$. Set
$H=F+iG\in C\left(  X\right)  $. Thus
\[
T\left(  H\right)  =T\left(  F+iG\right)  =T\left(  i\right)  G=h_{H}%
\cdot \left(  F+iG\right)
\]
for some $h_{H}\in C\left(  X\right)  $. Thus
\begin{align*}
T\left(  i\right)  \left(  x_{n}\right)  G\left(  x_{n}\right)   &
=h_{H}\left(  x_{n}\right)  \cdot \left(  F\left(  x_{n}\right)  +iG\left(
x_{n}\right)  \right) \\
T\left(  i\right)  \left(  x_{n}\right)  \cdot \frac{1}{n}  &  =h_{H}\left(
x_{n}\right)  \cdot \left(  \frac{1}{n}\cdot|\sin \left(  \frac{n\pi}{2}\right)
|+i\frac{1}{n}\right) \\
T\left(  i\right)  \left(  x_{n}\right)   &  =h_{H}\left(  x_{n}\right)
\cdot|\sin \left(  \frac{n\pi}{2}\right)  |+ih_{H}\left(  x_{n}\right)
\end{align*}

for every $n\in \mathbb{N}$.

Clearly, we have $T\left(  i\right)  \left(  x_{n}\right)  =ih_{H}\left(
x_{n}\right)  $ when $n$ is even, and $T\left(  i\right)  \left(
x_{n}\right)  =h_{H}\left(  x_{n}\right)  +ih_{H}\left(  x_{n}\right)  $ when
$n$ is odd. Since $T\left(  i\right)  \left(  x_{n}\right)  \rightarrow
T\left(  i\right)  \left(  x\right)  $ as $n\rightarrow \infty$, we have
\[
T\left(  i\right)  \left(  x\right)  =ih_{H}\left(  x\right)  =h_{H}\left(
x\right)  +ih_{H}\left(  x\right)  \text{.}%
\]
Thus $h_{H}\left(  x\right)  =0$. Thus $T\left(  i\right)  \left(  x\right)
=0$ for every $x\in A$. Since $X=\bar{A}$, $T\left(  i\right)  =0$. Thus
$T\left(  h\right)  =0$ for every $h\in C\left(  X\right)  $, i.e., $T=0$.
Thus every additive local multiplication on $C\left(  X\right)  $ is a
multiplication, i.e., $X$ is an $\eta$-space\textbf{.}
\end{proof}

\begin{remark}
The preceding theorem tells us whether $X$ is an $\eta$-space has nothing to
do with the connectedness of $X$. From the theorem, we know the Cantor set and
the closed interval $[0,1]$ are $\eta$-spaces but the former is totally
disconnected and the latter is connected. Note that in this theorem $X=\bar
{A}$ implies has no isolated points.
\end{remark}

\begin{corollary}
Suppose $X$ is a compact Hausdorff space. If $X$ is first countable, then $X$
is an $\eta$-space if and only if $X$ has no isolated points.
\end{corollary}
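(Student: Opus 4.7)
The plan is to reduce the statement entirely to Theorem \ref{slp} together with the preceding corollary. One direction is immediate: if $X$ is an $\eta$-space, then by the preceding corollary $X$ has no isolated points, and first countability plays no role here. So the content is in the converse.

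For the converse, assume $X$ is first countable and has no isolated points. I would show that the set $A$ of sequential limit points equals all of $X$, so in particular $X = \bar{A}$, and then Theorem \ref{slp} delivers that $X$ is an $\eta$-space. To prove $A = X$, fix $x \in X$ and use first countability to pick a countable local base $\{V_n\}_{n\ge 1}$ at $x$. Replacing it with $U_n = V_1 \cap \cdots \cap V_n$ gives a decreasing countable local base at $x$. Because $x$ is not isolated, no open neighborhood of $x$ collapses to $\{x\}$, so each $U_n \setminus \{x\}$ is nonempty and I can choose $x_n \in U_n \setminus \{x\}$. The resulting sequence converges to $x$: given any open $V \ni x$, some $U_N \subseteq V$, and then for $n \ge N$ we have $x_n \in U_n \subseteq U_N \subseteq V$. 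Hence $x$ is a sequential limit point.

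There is no real obstacle here—the substantive work is packaged inside Theorem \ref{slp}, and the remaining step is the standard observation that first countability combined with the absence of isolated points forces every point to be a sequential limit point. If anything, the only care needed is to pass from an arbitrary countable local base to a decreasing one so that the selected points $x_n$ actually converge to $x$, which is the classical intersection trick used above.
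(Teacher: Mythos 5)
Your proof is correct and follows the route the paper intends: the forward direction is the earlier corollary that an $\eta$-space has no isolated points, and the converse uses first countability (with the standard decreasing-base trick) to show every point is a sequential limit point, so Theorem \ref{slp} applies.
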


\begin{corollary}
Suppose $Y$ is a completely regular Hausdorff space, and let $A$ be the set of
all sequential limit points of $Y$. If $A$ is dense in $Y$, then $\beta \left(
Y\right)  $ is an $\eta$-space.
\end{corollary}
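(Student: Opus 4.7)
The plan is to reduce this directly to Theorem \ref{slp} by showing that the sequential limit points of $\beta(Y)$ form a dense subset. The key observation is that $Y$ embeds in $\beta(Y)$ as a dense subspace with the subspace topology matching its original topology (this is precisely how $\beta(Y)$ is defined for a completely regular Hausdorff space $Y$).

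Given this, I would first verify that every $x \in A$ is a sequential limit point of $\beta(Y)$. If $\{x_n\} \subseteq Y \setminus \{x\}$ is a sequence with $x_n \to x$ in $Y$, then because the inclusion $Y \hookrightarrow \beta(Y)$ is a topological embedding, the same sequence $\{x_n\}$ lies in $\beta(Y) \setminus \{x\}$ and converges to $x$ in $\beta(Y)$. Hence $A$ is contained in the set $A'$ of sequential limit points of $\beta(Y)$.

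Next I would show $A$ is dense in $\beta(Y)$. By hypothesis $A$ is dense in $Y$, so $\overline{A}^{Y} = Y$, and since $Y$ is dense in $\beta(Y)$ by construction of the Stone-\v{C}ech compactification, transitivity of density yields $\overline{A}^{\beta(Y)} \supseteq \overline{Y}^{\beta(Y)} = \beta(Y)$. Therefore $A' \supseteq A$ is dense in $\beta(Y)$, so $\beta(Y) = \overline{A'}$, and Theorem \ref{slp} applied to the compact Hausdorff space $\beta(Y)$ gives that $\beta(Y)$ is an $\eta$-space.

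No step presents a real obstacle; the whole argument is a short topological bookkeeping exercise whose only content is the fact that sequential convergence is preserved under the embedding $Y \hookrightarrow \beta(Y)$ and that density propagates through dense subspaces. The corollary is essentially the observation that the hypothesis of Theorem \ref{slp} is inherited by $\beta(Y)$ whenever it holds for a dense subspace $Y$.
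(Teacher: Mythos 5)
Your proof is correct and is exactly the argument the paper intends: the corollary is stated without proof as an immediate consequence of Theorem \ref{slp}, and your reduction (sequential convergence in $Y$ persists in $\beta(Y)$ since $Y$ is embedded, and density of $A$ in $Y$ plus density of $Y$ in $\beta(Y)$ gives density of the sequential limit points of $\beta(Y)$) is the straightforward way to obtain it.
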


\bigskip

Next theorem is another version of \cite[Theorem 6]{HK}.

\begin{theorem}
\label{union}Suppose $X$ is a compact Hausdorff space, and $\left \{
K_{i}\right \}  $ is a collection of closed subset of $X$. If each $K_{i}$ is
an $\eta$-space, then the closure of the union of $K_{i}$'s is an $\eta$-space.
\end{theorem}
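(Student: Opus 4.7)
The plan is to reduce to $T(1)=0$ as in Theorem \ref{slp}, and then for each $K_i$ induce from $T$ an additive local multiplication $T_i$ on $C(K_i)$; since $K_i$ is an $\eta$-space, $T_i$ will vanish, and by density this will force $T$ to vanish on all of $K:=\overline{\bigcup_i K_i}$.

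Let $T$ be an additive local multiplication on $C(K)$, where $K=\overline{\bigcup_i K_i}$. Replacing $T$ by $T-M_{T(1)}$ (which is still an additive local multiplication, since additive local multiplications are closed under linear combinations), I may assume $T(1)=0$, and the goal is to prove $T=0$. Note that each $K_i$ is closed in $X$ and contained in $K$, so $K_i$ is closed in $K$; since $K$ is compact Hausdorff and hence normal, the Tietze extension theorem guarantees that every $g\in C(K_i)$ extends to some $\tilde g\in C(K)$.

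For each $i$, define $T_i\colon C(K_i)\to C(K_i)$ by $T_i(g)=T(\tilde g)|_{K_i}$, where $\tilde g$ is any continuous extension of $g$. The key verification is well-definedness: if $\tilde g_1$ and $\tilde g_2$ are two extensions of $g$, then $\tilde g_1-\tilde g_2$ vanishes on $K_i$, and since $T$ is a local multiplication on $C(K)$ there exists $a\in C(K)$ with $T(\tilde g_1-\tilde g_2)=a\cdot(\tilde g_1-\tilde g_2)$, whence $T(\tilde g_1-\tilde g_2)$ vanishes on $K_i$ and the additivity of $T$ gives $T(\tilde g_1)|_{K_i}=T(\tilde g_2)|_{K_i}$. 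The same formula, applied to each individual $\tilde g$ and restricted to $K_i$, shows that $T_i$ is an additive local multiplication on $C(K_i)$, and clearly $T_i(1)=T(1)|_{K_i}=0$. Since $K_i$ is an $\eta$-space, $T_i=M_{T_i(1)}=0$.

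Finally, for any $f\in C(K)$ and any $i$, applying the above with $g:=f|_{K_i}$ and $\tilde g:=f$ yields $T(f)|_{K_i}=T_i(f|_{K_i})=0$. Hence $T(f)$ vanishes on $\bigcup_i K_i$; by continuity of $T(f)$ it vanishes on $K=\overline{\bigcup_i K_i}$, so $T(f)=0$. Therefore $T=0$, and $K$ is an $\eta$-space. The main obstacle is the well-definedness of $T_i$: additivity alone does not give it, but combining additivity with the local-multiplication property (to see that $T$ annihilates functions vanishing on $K_i$ when restricted to $K_i$) handles it cleanly, and the rest of the argument is then formal.
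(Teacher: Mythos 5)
Your proposal is correct and follows essentially the same route as the paper's own proof: reduce to $T(1)=0$, define $T_i(g)=T(\tilde g)|_{K_i}$ via Tietze extensions, use the local-multiplication property to establish well-definedness, invoke the $\eta$-space hypothesis on each $K_i$ to get $T_i=0$, and conclude by density. The only difference is that you spell out the reduction to $T(1)=0$ and the verification that $T_i$ is a local multiplication slightly more explicitly than the paper does.
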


\begin{proof}
Let $K$ be the closure of the union of $K_{i}$'s. Suppose $T$ is an additive
local multiplication on $C\left(  K\right)  $ and $T\left(  1\right)  =0$. For
each $i$ in $I$, define $T_{i}$ on $C\left(  K_{i}\right)  $ by
\[
T_{i}\left(  f\right)  =T\left(  \widetilde{f}\right)  |_{K_{i}}\text{ for
every }f\in C\left(  K_{i}\right)  \text{,}%
\]

where $\widetilde{f}\in C\left(  K\right)  $ is a Tietze extension of $f$. The
definition is well-defined. In fact, suppose $\widetilde{f_{1}}$,
$\widetilde{f_{2}}$ $\in C\left(  K\right)  $ and $\widetilde{f_{1}}|_{K_{i}%
}=$ $\widetilde{f_{2}}|_{K_{i}}$. Thus%
\[
T\left(  \widetilde{f_{1}}\right)  -T\left(  \widetilde{f_{2}}\right)
=T\left(  \widetilde{f_{1}}-\widetilde{f_{2}}\right)  =h\cdot \left(
\widetilde{f_{1}}-\widetilde{f_{2}}\right)
\]

for some $h\in C\left(  K\right)  $. Then%
\begin{align*}
T\left(  \widetilde{f_{1}}\right)  |_{K_{i}}-T\left(  \widetilde{f_{2}%
}\right)  |_{K_{i}}  &  =T\left(  \widetilde{f_{1}}-\widetilde{f_{2}}\right)
|_{K_{i}}=h|_{K_{i}}\cdot \left(  \widetilde{f_{1}}-\widetilde{f_{2}}\right)
|_{K_{i}}\\
&  =0\text{.}%
\end{align*}

Thus $T\left(  \widetilde{f_{1}}\right)  |_{K_{i}}=T\left(  \widetilde{f_{2}%
}\right)  |_{K_{i}}$. Clearly, $T_{i}$ is an additive local multiplication on
$C\left(  K_{i}\right)  $. Since $K_{i}$ is an $\eta$-space and $T_{i}\left(
1\right)  =0$, we have $T_{i}=0$. Thus for every $g\in C\left(  K\right)  $,
$T\left(  g\right)  |_{K_{i}}=0$ for each $i$ in $I$. Since $K$ is the closure
of the union of $K_{i}$'s, $T\left(  g\right)  =0$. Since $g$ is arbitrary, we
have $T=0$. Thus $K$ is an $\eta$-space.
\end{proof}

\begin{theorem}
\label{product}Suppose $Y$ is a compact Hausdorff space, and $K$ is an $\eta
$-space. Then $Y\times K$ is an $\eta$-space.
\end{theorem}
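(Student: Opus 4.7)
The plan is to deduce this immediately from Theorem \ref{union}, by slicing $Y\times K$ into copies of $K$. For each $y\in Y$, I would set $K_{y}=\{y\}\times K$. Since $Y$ is Hausdorff the singleton $\{y\}$ is closed in $Y$, so $K_{y}$ is a closed subset of the compact Hausdorff space $Y\times K$. The map $k\mapsto(y,k)$ is a homeomorphism of $K$ onto $K_{y}$, and since the $\eta$-space property is phrased purely in terms of the algebra $C(X)$, it is invariant under homeomorphism; hence each $K_{y}$ is itself an $\eta$-space.

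Next I would observe that $\bigcup_{y\in Y}K_{y}=Y\times K$, so the union of this family is already closed and coincides with its own closure in $Y\times K$. Applying Theorem \ref{union} to the family $\{K_{y}\}_{y\in Y}$ of closed $\eta$-subspaces of $Y\times K$ then yields at once that $Y\times K$ is an $\eta$-space.

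There is no real obstacle in this argument; the only point that deserves a brief verification is the invariance of the $\eta$-space property under homeomorphism. A homeomorphism $\varphi\colon K\to K_{y}$ induces an algebra isomorphism $\varphi^{*}\colon C(K_{y})\to C(K)$ sending $f$ to $f\circ\varphi$, and one checks that $T\mapsto \varphi^{*}\circ T\circ(\varphi^{*})^{-1}$ carries additive local multiplications to additive local multiplications and multiplications to multiplications, so the property transfers from $K$ to $K_{y}$. With that in hand, the proof is a single invocation of Theorem \ref{union}.
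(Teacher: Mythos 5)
Your proposal is correct and matches the paper's own proof exactly: the paper also writes $Y\times K=\bigcup_{y\in Y}\{y\}\times K$, notes that each slice is a closed $\eta$-subspace, and applies Theorem \ref{union}. Your extra remark on homeomorphism-invariance of the $\eta$-property just makes explicit a step the paper leaves implicit.
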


\begin{proof}
For any $y\in Y$, $\left \{  y\right \}  \times K$ is a closed $\eta$-subspace
of $Y\times K$. Thus from the last theorem, $Y\times K=\cup_{y\in Y}\left \{
y\right \}  \times K$ is an $\eta$-space.
\end{proof}

\begin{corollary}
Every compact Hausdorff space is homeomorphic to a subspace of an $\eta$-space.
\end{corollary}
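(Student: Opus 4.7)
The plan is to piece together Theorem~\ref{slp} and Theorem~\ref{product} with an explicit choice of $\eta$-space. Theorem~\ref{product} says that the product of any compact Hausdorff space with an $\eta$-space is again an $\eta$-space, so if I exhibit even one concrete compact Hausdorff $\eta$-space $K$, then for every compact Hausdorff $Y$ the product $Y \times K$ will be an $\eta$-space, and $Y$ will embed into $Y \times K$ via the slice map $y \mapsto (y, k_0)$ for any fixed $k_0 \in K$.

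First, I would pick $K = [0,1]$ as my ambient $\eta$-space. It is compact Hausdorff, and every point of $[0,1]$ is the limit of a sequence in $[0,1] \setminus \{x\}$, so the set $A$ of sequential limit points of $[0,1]$ equals $[0,1]$ itself, in particular $[0,1] = \bar A$. By Theorem~\ref{slp}, $[0,1]$ is an $\eta$-space. (Equivalently, one could cite the remark following Theorem~\ref{slp}.)

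Second, given an arbitrary compact Hausdorff space $Y$, I would apply Theorem~\ref{product} with this choice $K = [0,1]$ to conclude that $Y \times [0,1]$ is a compact Hausdorff $\eta$-space. Finally, I would verify that the map $\iota \colon Y \to Y \times [0,1]$ defined by $\iota(y) = (y, 0)$ is a homeomorphic embedding: it is clearly continuous and injective, $Y$ is compact, and $Y \times [0,1]$ is Hausdorff, so $\iota$ is automatically a homeomorphism onto its image $Y \times \{0\}$, which is a closed subspace of the $\eta$-space $Y \times [0,1]$.

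There is no real obstacle here; the corollary is essentially an immediate packaging of the two preceding structural theorems once one notices that a single witness suffices. The only sanity check is to confirm that $[0,1]$ qualifies as an $\eta$-space, which is precisely the content of Theorem~\ref{slp}.
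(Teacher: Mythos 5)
Your proof is correct and matches the argument the paper intends: the corollary is stated as an immediate consequence of Theorem~\ref{product} applied to any single concrete $\eta$-space such as $[0,1]$ (which the remark after Theorem~\ref{slp} already identifies), with the standard slice embedding $y\mapsto(y,k_{0})$. Nothing further is needed.
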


\begin{theorem}
Every compact Hausdorff space has a unique maximal $\eta$-subspace.
\end{theorem}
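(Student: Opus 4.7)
The plan is to apply Theorem \ref{union} to the family of all $\eta$-subspaces of $X$. First I would observe that since $\eta$-spaces are by definition compact Hausdorff, any $\eta$-subspace $L$ of $X$ must be compact, and since $X$ is Hausdorff this forces $L$ to be closed in $X$. So the notions of ``closed $\eta$-subspace'' and ``$\eta$-subspace'' coincide for subspaces of $X$, and Theorem \ref{union} applies to any collection of $\eta$-subspaces.

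Next I would let $\mathcal{K}$ denote the collection of all $\eta$-subspaces of $X$. This collection is nonempty; for instance, the empty subspace is vacuously an $\eta$-space, so $\mathcal{K}$ is never empty (and in most cases of interest will contain many proper nonempty members by Theorem \ref{slp}). Define
\[
K=\overline{\bigcup_{L\in \mathcal{K}}L}.
\]
By Theorem \ref{union}, $K$ is itself an $\eta$-space, and it is closed in $X$. Hence $K\in \mathcal{K}$.

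By construction, $K\supseteq L$ for every $L\in \mathcal{K}$, so $K$ is a maximal element of $\mathcal{K}$ with respect to inclusion. Uniqueness is immediate: if $K'$ is any maximal $\eta$-subspace, then $K'\in \mathcal{K}$, so $K'\subseteq K$, and then maximality of $K'$ forces $K'=K$.

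There is no real obstacle here; the work has already been done in Theorem \ref{union}. The only point to be careful about is the initial observation that compactness of an $\eta$-subspace (built into the definition) automatically makes it closed in the ambient compact Hausdorff space $X$, so that Theorem \ref{union}, which is stated for collections of closed subsets, can be applied directly to the collection $\mathcal{K}$.
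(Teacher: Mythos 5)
Your proposal is correct and follows essentially the same route as the paper: take the closure of the union of all $\eta$-subspaces, invoke Theorem \ref{union} to see it is an $\eta$-space, and note it contains every $\eta$-subspace. Your extra remarks (compact subspaces of a Hausdorff space are closed, so Theorem \ref{union} applies; the collection is nonempty because the empty set qualifies) merely make explicit what the paper leaves implicit.
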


\begin{proof}
Suppose $X$ is a compact Hausdorff space, and let $K$ be the closure of the
union of all $\eta$-subspaces of $X$. Thus $K$ is an $\eta$-space. Any $\eta
$-subspace of $X$ is a subset of $K$. Thus $K$ is the unique maximal $\eta
$-subspace of $X$.
\end{proof}

\begin{definition}
Suppose $X$ is a completely regular Hausdorff space. For every $f\in C\left(
X\right)  $, let $Z\left(  f\right)  =\left \{  x\in X:f\left(  x\right)
=0\right \}  $ be the\textbf{ zero-set of }$f$\textbf{.} Any set that is a
zero-set of some function in $C\left(  X\right)  $ is called a
\textbf{zero-set in }$X$\textbf{.} We call the complement of a zero-set a
\textbf{cozero-set}. We say a subspace $S$ of $X$ is $R$\textbf{-embedded} if
every bounded function in $C_{R}\left(  S\right)  $ can be extended to a
bounded function in $C_{R}\left(  X\right)  $. If every cozero-set in $X$ is
$R$-embedded, we call $X$ an \textbf{F-space }\cite[Theorem 14.25(6)]{GJ}.
\end{definition}

If $g\in C\left(  X\right)  $ is nonzero, then the map $T\left(  f\right)
=g\bar{f}$ is not a multiplication. We now characterize the spaces $X$ for
which $T$ is a local multiplication.

For any $a$, $b\in \mathbb{R}$, the symbol $a\vee b$ denotes $\sup \left \{
a\text{, }b\right \}  $. Likewise, $a\wedge b$ denotes $\inf \left \{  a\text{,
}b\right \}  $. For any $f$, $g\in C_{R}\left(  X\right)  $, define $\left(
f\vee g\right)  \left(  x\right)  =f\left(  x\right)  \vee g\left(  x\right)
$, for every $x\in X$. Thus $f\vee g\in C_{R}\left(  X\right)  $. Dually, we
defined $f\wedge g\in C_{R}\left(  X\right)  $. In the proof of the next
theorem, we use an equivalent condition for $X$ to be an F-space \cite[Theorem
14.25(5)]{GJ}, i.e., given $f\in C_{R}\left(  X\right)  $, there exists $k\in
C_{R}\left(  X\right)  $ such that $f=k\cdot \left \vert f\right \vert $.

\begin{lemma}
\label{oldgfbar}Suppose $X$ is a compact Hausdorff space, and $0\neq g\in
C\left(  X\right)  $. Let $A=X\backslash Z\left(  g\right)  $. Define
$T\left(  f\right)  =g\cdot \bar{f}$ for every $f\in C\left(  X\right)  $. Then
$T$ is a local multiplication if and only if $A$ is an F-space.
\end{lemma}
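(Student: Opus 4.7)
The plan is to recast the question as a single continuous-extension problem on $A$. If $g\bar f = a_f f$ on $X$ for some $a_f\in C(X)$, then on the open set $A\cap\{f\neq 0\}$ (where neither $g$ nor $f$ vanishes) $a_f$ is forced to equal $g\bar f/f$, a continuous function of modulus $|g|$. Conversely, if $\bar f/f$ extends from $A\cap\{f\neq 0\}$ to a bounded continuous function $c$ on $A$, then $a_f:=gc$ on $A$, $a_f:=0$ on $X\setminus A$, is continuous on $X$ (since $g\to 0$ at $\partial A$ while $|c|$ stays bounded) and satisfies $a_ff=g\bar f$ everywhere. Thus $T$ is a local multiplication if and only if, for every $f\in C(X)$, $\bar f/f$ admits a continuous extension from $A\cap\{f\neq 0\}$ to $A$. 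It remains to show this is equivalent to $A$ being an F-space.

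For the forward direction, assume $T$ is a local multiplication and take an arbitrary $u\in C_R(X)$; I would test the hypothesis on $f_u:=u+i|u|$. A direct computation gives $\bar f_u/f_u = -i$ on $\{u>0\}$ and $\bar f_u/f_u=+i$ on $\{u<0\}$, so the guaranteed $a_{f_u}\in C(X)$ must satisfy $a_{f_u}/g = -i$ on $\{u>0\}\cap A$ and $+i$ on $\{u<0\}\cap A$, with $a_{f_u}/g$ continuous on $A$. Taking imaginary part, truncating to $[-1,1]$, and rescaling produces a continuous $A\to[0,1]$ function that is $0$ on $\{u>0\}\cap A$ and $1$ on $\{u<0\}\cap A$, completely separating them inside $A$. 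Next I would verify that cozero-sets of $A$ (in the subspace topology) are precisely the cozero-sets of $X$ contained in $A$: the nontrivial direction uses that $A=X\setminus Z(g)$, so if $h\in C_R^\ast(A)$ then $gh$ extended by $0$ on $X\setminus A$ is continuous on $X$ with cozero-set equal to $\operatorname{coz}(h)$. Given any two disjoint cozero-sets $U_1,U_2\subseteq A$ of $X$, one picks $f_1,f_2\in C_R(X)$ with $f_i\ge 0$ and $\operatorname{coz}(f_i)=U_i$ and sets $u=f_1-f_2$; then $\{u>0\}=U_1$ and $\{u<0\}=U_2$, so the construction above completely separates them in $A$. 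Hence every pair of disjoint cozero-sets of $A$ is completely separated in $A$, i.e., $A$ is an F-space.

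For the converse, assume $A$ is an F-space. Given $f\in C(X)$, the set $U:=A\cap\{f\neq 0\}$ is a cozero-set of $A$, and on $U$ the function $\bar f/f$ has modulus $1$, so its real and imaginary parts lie in $C_R^\ast(U)$. By the F-space assumption, $U$ is $R$-embedded in $A$, so each part extends to a function in $C_R(A)$; after clipping to $[-1,1]$ and combining, I obtain a bounded $c\in C(A)$ with $c=\bar f/f$ on $U$. The identity $cf=\bar f$ then holds on all of $A$ (on $U$ by construction, on $A\setminus U$ because $f=0$). Defining $a_f:=gc$ on $A$ and $a_f:=0$ on $X\setminus A$ yields, as noted in the first paragraph, a continuous function on $X$ satisfying $a_ff=g\bar f$; so $T$ is a local multiplication.

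The main obstacle, as I see it, is the forward direction's passage from data about the single test family $\{f_u = u+i|u|\}$ to complete separation of \emph{all} disjoint cozero-sets of $A$. This requires the auxiliary topological observation that cozero-sets of $A$ are exactly cozero-sets of $X$ lying in $A$, which depends crucially on $A$ itself being a cozero-set, namely $\operatorname{coz}(g)$. Once that identification is made the equivalence is essentially a sign computation packaged into a Urysohn-type separation.
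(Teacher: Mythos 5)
Your proof is correct. The backward direction coincides with the paper's: on the cozero-set $A\backslash Z\left(  f|_{A}\right)  $ the function $\bar{f}/f$ has bounded real and imaginary parts, these extend over $A$ by the F-space property, and multiplying by $g$ and extending by zero across $Z\left(  g\right)  $ produces the required multiplier in $C\left(  X\right)  $. In the forward direction you and the paper use the same key test functions $w+i\left\vert w\right\vert $, for which $\bar{f}/f=\mp i$ according to the sign of $w$, but you aim at a different clause of \cite[Theorem 14.25]{GJ}: from $u\in C_{R}\left(  X\right)  $ you extract a complete separation of $\left\{  u>0\right\}  \cap A$ and $\left\{  u<0\right\}  \cap A$ inside $A$, and then upgrade to all disjoint cozero-sets of $A$ via the identification of cozero-sets of $A$ with cozero-sets of $X$ contained in $A$ (which is exactly where the hypothesis that $A=X\backslash Z\left(  g\right)  $ is itself a cozero-set enters). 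The paper instead starts from an arbitrary $h\in C_{R}\left(  A\right)  $, truncates it to $h_{r}$, multiplies by $g$ to obtain a genuine element of $C\left(  X\right)  $, and reads off directly the criterion $h=f_{1}\cdot \left\vert h\right\vert $ of \cite[Theorem 14.25(5)]{GJ}. The two bridges between $C\left(  A\right)  $ and $C\left(  X\right)  $ --- your cozero-set identification and the paper's multiplication by $g$ --- rest on the same fact that a bounded function on $A$ times $g$ extends continuously by zero, so the arguments are close relatives; yours makes the separation-theoretic content and the role of $A$ being a cozero-set explicit, at the cost of an auxiliary lemma and of invoking an equivalent form of the F-space condition that the paper does not cite, while the paper's version is marginally more self-contained. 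One cosmetic remark: the equivalence stated at the end of your first paragraph should require a \emph{bounded} continuous extension of $\bar{f}/f$ for the ``if'' direction to hold as stated, but since you only ever apply it with the clipped (hence bounded) extension, nothing is affected.
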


\begin{proof}
$\Leftarrow$: Suppose $A$ is an F-space. For every $f\in C\left(  X\right)  $,
$f|_{A}\in C\left(  A\right)  $. Denote $f|_{A}$ by $f_{A}$. Set $f_{A}=u+iv$,
where $u$, $v\in C_{R}\left(  A\right)  $. Thus on $A\backslash Z\left(
f_{A}\right)  $, we have%
\[
\frac{\bar{f}_{A}}{f_{A}}=\frac{u^{2}-v^{2}}{u^{2}+v^{2}}-i\frac{2uv}%
{u^{2}+v^{2}}\text{.}%
\]

Let $h_{1}=\frac{u^{2}-v^{2}}{u^{2}+v^{2}}$ and $h_{2}=\frac{2uv}{u^{2}+v^{2}%
}$. Clearly, $h_{1}$, $h_{2}$ are bounded real continuous functions on
$A\backslash Z\left(  f_{A}\right)  =A\backslash Z\left(  u^{2}+v^{2}\right)
$, where $u^{2}+v^{2}\in C_{R}\left(  A\right)  $. Since $A$ is an F-space,
$h_{1}$, $h_{2}$ have bounded continuous extensions on $A$, say $\widetilde
{h_{1}}$, $\widetilde{h_{2}}$. Define%
\[
h=g\cdot \left(  \widetilde{h_{1}}-i\widetilde{h_{2}}\right)  \text{ on
}A\text{, and }h=0\text{ on }Z\left(  g\right)  \text{.}%
\]

Thus $h\in C\left(  X\right)  $ and $h\cdot f=g\cdot \bar{f}$.

Thus $T\left(  f\right)  =g\cdot \bar{f}$ is a local multiplication.

$\Longrightarrow$: Suppose $T$ is a local multiplication. For each $h\in
C_{R}\left(  A\right)  $, if $h$ is not bounded, choose an $r>0$ and let
$h_{r}=\left(  -r\vee h\right)  \wedge r$, then $h_{r}$ is a bounded real
continuous function on $A$. Define%
\[
k=g\cdot \left(  h_{r}+i|h_{r}|\right)  \text{ on }A\text{, and }k=0\text{ on
}Z\left(  g\right)  \text{.}%
\]

Thus $k\in C\left(  X\right)  $. Since $T$ is a local multiplication, there is
an $h_{k}\in C\left(  X\right)  $ such that $T\left(  k\right)  =g\cdot \bar
{k}=h_{k}\cdot k$. Thus on $A\backslash Z\left(  h_{r}\right)  $, we have%
\[
h_{k}=g\cdot \frac{\bar{k}}{k}=g\cdot \frac{\bar{g}\cdot \left(  h_{r}%
-i|h_{r}|\right)  }{g\cdot \left(  h_{r}+i|h_{r}|\right)  }=-i\bar{g}\cdot
\frac{h_{r}}{|h_{r}|}\text{,}%
\]

i.e., $h_{k}=-i\bar{g}\cdot \frac{h_{r}}{|h_{r}|}$. Since $\bar{g}\neq0$ on
$A$, we have%
\[
\frac{ih_{k}}{\bar{g}}=\frac{h_{r}}{|h_{r}|}%
\]

on $A\backslash Z\left(  h_{r}\right)  $. Set $\frac{ih_{k}}{\bar{g}}%
=f_{1}+if_{2}$, where $f_{1}$, $f_{2}\in C_{R}\left(  A\right)  $. From above,
$f_{1}=\frac{h_{r}}{|h_{r}|}$, and $f_{2}=0$ on $A\backslash Z\left(
h_{r}\right)  $. Thus $h_{r}=f_{1}\cdot|h_{r}|$ on $A$.

We say $h=f_{1}\cdot|h|$ on $A$. Since if $h\left(  x\right)  <-r$ for some
$x\in A$, we have $h_{r}\left(  x\right)  =-r$. Then $f_{1}\left(  x\right)
=-1$ and $h\left(  x\right)  =-|h\left(  x\right)  |=f_{1}\left(  x\right)
\cdot|h\left(  x\right)  |$. For $h\left(  x\right)  >r$, we get the same
result. Thus we proved that for every $h\in C_{R}\left(  A\right)  $, there
exists an $f_{1}\in C_{R}\left(  A\right)  $ such that $h=f_{1}\cdot|h|$,
i.e., $A$ is an F-space.
\end{proof}

\bigskip

\begin{theorem}
\label{F}Suppose $X$ is a compact Hausdorff space. The following are equivalent:

\begin{enumerate}
\item The conjugation map is a local multiplication on $C\left(  X\right)  $.

\item $X$ is an F-space.

\item The set of $\mathbb{R}$-linear local multiplications on $C\left(
X\right)  $ consists of all maps of the form%
\[
T\left(  f\right)  =T\left(  1\right)  \operatorname{Re}\left(  f\right)
+T\left(  i\right)  \operatorname{Im}\left(  f\right)  \text{.}%
\]

\end{enumerate}
\end{theorem}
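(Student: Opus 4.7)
The plan is to run the cycle $(1) \Leftrightarrow (2) \Rightarrow (3) \Rightarrow (1)$, using Lemma \ref{oldgfbar} for the first equivalence and a short ``subtraction trick'' for $(2) \Rightarrow (3)$.

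The equivalence $(1) \Leftrightarrow (2)$ is immediate from Lemma \ref{oldgfbar} applied with $g = 1$: the conjugation map $f \mapsto \bar f$ is exactly $f \mapsto g \bar f$, and since $Z(1) = \emptyset$, the lemma's set $A = X \setminus Z(g)$ coincides with $X$, so conjugation is a local multiplication if and only if $X$ is an F-space. For $(3) \Rightarrow (1)$, observe that conjugation is $\mathbb{R}$-linear with $T(1) = 1$ and $T(i) = -i$, and $1 \cdot \operatorname{Re}(f) + (-i)\cdot \operatorname{Im}(f) = \bar f$; thus conjugation already belongs to the class described in $(3)$ and is therefore a local multiplication.

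The substantive step is $(2) \Rightarrow (3)$. I would first verify that every map of the displayed form is an $\mathbb{R}$-linear local multiplication: by $(1)$, conjugation is a local multiplication, and since local multiplications are closed under $\mathbb{R}$-linear combinations and under composition with (left) multiplications by fixed functions, the maps $\operatorname{Re}(f) = (f + \bar f)/2$ and $\operatorname{Im}(f) = (f - \bar f)/(2i)$ are local multiplications, and hence so is $f \mapsto T(1)\operatorname{Re}(f) + T(i)\operatorname{Im}(f)$ for any fixed $T(1), T(i) \in C(X)$. Conversely, given an arbitrary $\mathbb{R}$-linear local multiplication $T$, set
\[
S(f) = T(f) - T(1)\operatorname{Re}(f) - T(i)\operatorname{Im}(f).
\]
By what was just shown, $S$ is itself an $\mathbb{R}$-linear local multiplication, and $S(1) = S(i) = 0$ by direct computation. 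For any $u \in C_{R}(X)$ and $x_{0} \in X$, the function $u - u(x_{0}) \cdot 1$ vanishes at $x_{0}$, so $S\bigl(u - u(x_{0})\bigr)(x_{0}) = 0$; by $\mathbb{R}$-linearity and $S(1) = 0$ this equals $S(u)(x_{0})$, forcing $S|_{C_{R}(X)} = 0$. Applying the same trick to $iv - v(x_{0}) \cdot i$ and using $S(i) = 0$ gives $S(iv) = 0$ for every $v \in C_{R}(X)$, and additivity then yields $S \equiv 0$, so $T$ has the required form.

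The main obstacle I anticipate is precisely the first half of $(2) \Rightarrow (3)$: hypothesis $(2)$ must be used there to guarantee that the class of maps described in $(3)$ lies inside the local multiplications, because otherwise the subtraction $S$ in the converse half would fall outside the local-multiplication class and the standard ``vanishes at $x_{0}$'' argument that drives $S(u) = 0$ would no longer apply.
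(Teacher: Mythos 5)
Your proposal is correct and rests on the same two pillars as the paper's proof: Lemma \ref{oldgfbar} (applied with $g=1$) for $(1)\Leftrightarrow(2)$, and the observation that a map of the displayed form is an $\mathbb{R}$-linear combination of the identity and conjugation composed with fixed multiplications, which is the paper's identity $T(f)=\bigl[\tfrac{T(1)-iT(i)}{2}\bigr]f+\bigl[\tfrac{T(1)+iT(i)}{2}\bigr]\bar f$ in slightly different clothing. Two of your variations are worth noting. First, in $(2)\Rightarrow(3)$ you deduce that $g\bar f$ is a local multiplication for arbitrary $g$ directly from the global factorization $\bar f=h_f f$, whereas the paper invokes Lemma \ref{oldgfbar} on the cozero-set of $\tfrac{T(1)+iT(i)}{2}$ and must cite the fact that cozero-sets of F-spaces are F-spaces (\cite[14.26]{GJ}); your route avoids that citation and is marginally more elementary. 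Second, you close the cycle with $(3)\Rightarrow(1)$ by noting conjugation itself has the stated form (with $T(1)=1$, $T(i)=-i$), while the paper proves $(3)\Rightarrow(2)$ by choosing $T(1),T(i)$ with nowhere-vanishing $\tfrac{T(1)+iT(i)}{2}$; both are valid. One small caveat: your closing remark overstates the role of hypothesis $(2)$ in the ``converse half'' of $(3)$ --- the fact that every $\mathbb{R}$-linear local multiplication has the stated form needs no hypothesis on $X$ at all (apply the ``vanishes at $x_0$'' argument to $T$ directly, as the paper implicitly does), and $(2)$ is needed only for your particular detour through the subtracted map $S$.
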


\begin{proof}
The equivalent of $\left(  1\right)  $ and $\left(  2\right)  $ is immediately
derived from Lemma \ref{oldgfbar}.

$\left(  2\right)  \Leftrightarrow \left(  3\right)  $: Suppose $T$ is an
$\mathbb{R}$-linear local multiplication on $C\left(  X\right)  $. It is clear
$T$ has the form $T\left(  f\right)  =T\left(  1\right)  \operatorname{Re}%
\left(  f\right)  +T\left(  i\right)  \operatorname{Im}\left(  f\right)  $ for
any $f\in C\left(  X\right)  $. Conversely, if $T$ is a map on $C\left(
X\right)  $ which has the form $T\left(  f\right)  =T\left(  1\right)
\operatorname{Re}\left(  f\right)  +T\left(  i\right)  \operatorname{Im}%
\left(  f\right)  $, then of course it is $\mathbb{R}$-linear. Also we have%
\[
T\left(  f\right)  =T\left(  1\right)  \operatorname{Re}\left(  f\right)
+T\left(  i\right)  \operatorname{Im}\left(  f\right)  =\left[  \frac{T\left(
1\right)  -iT\left(  i\right)  }{2}\right]  \cdot f+\left[  \frac{T\left(
1\right)  +iT\left(  i\right)  }{2}\right]  \cdot \bar{f}\text{.}%
\]
Thus $T$ is a local multiplication if and only if $\left[  \frac{T\left(
1\right)  +iT\left(  i\right)  }{2}\right]  \cdot \bar{f}$ is a local
multiplication. If $X$ is an F-space, then every cozero-set in $X$ is an
F-space \cite[14.26]{GJ}. From Lemma \ref{oldgfbar}, we have $\left[
\frac{T\left(  1\right)  +iT\left(  i\right)  }{2}\right]  \cdot \bar{f}$ is a
local multiplication, i.e., $T$ is a local multiplication. Thus we prove
$\left(  2\right)  \Rightarrow \left(  3\right)  $.

If every $T$ of the form $T\left(  f\right)  =T\left(  1\right)
\operatorname{Re}\left(  f\right)  +T\left(  i\right)  \operatorname{Im}%
\left(  f\right)  $ is a local multiplication, then $\left[  \frac{T\left(
1\right)  +iT\left(  i\right)  }{2}\right]  \cdot \bar{f}$ is a local
multiplication. From Lemma \ref{oldgfbar}, $X\backslash Z\left(
\frac{T\left(  1\right)  +iT\left(  i\right)  }{2}\right)  $ is an F-space.
Choose $T\left(  1\right)  $ and $T\left(  i\right)  $ such that $Z\left(
\frac{T\left(  1\right)  +iT\left(  i\right)  }{2}\right)  $ is empty. Then
$X$ is an F-space.
\end{proof}

The following theorem is an immediate consequence of Lemma \ref{oldgfbar} and
the fact that $A\subseteq X$ is an open F$_{\sigma}$ if and only if there is a
$g\in C\left(  X\right)  $ such that $A=X\backslash Z\left(  g\right)  $ (See
\cite{GJ}.)

\begin{theorem}
\label{gfbar}Suppose $X$ is a compact Hausdorff space. The following are equivalent:

\begin{enumerate}
\item There is a nonzero function $g$ $\in C\left(  X\right)  $ such that the
map $T\left(  f\right)  =g\bar{f}$ is a local multiplication on $C\left(
X\right)  $.

\item There is a nonempty open F$_{\sigma}$ subset of $X$ that is an F-space.
\end{enumerate}
\end{theorem}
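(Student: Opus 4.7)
The plan is to derive both implications immediately from Lemma \ref{oldgfbar}, with the bridge supplied by the standard fact (cited from \cite{GJ} in the statement) that a subset $A\subseteq X$ of a compact Hausdorff space is an open F$_{\sigma}$ set if and only if $A=X\backslash Z(g)$ for some $g\in C(X)$; moreover, such $A$ is nonempty precisely when $g\neq 0$. First I would record this dictionary between nonzero $g\in C(X)$ and nonempty open F$_{\sigma}$ subsets of $X$: one direction is routine from the identity $Z(g)=\bigcap_{n}\{x:|g(x)|<1/n\}$, whose complement is the countable union of the closed sets $\{x:|g(x)|\geq 1/n\}$; the other direction uses normality of $X$ together with a standard Urysohn-and-summation argument to manufacture $g$ from a prescribed closed G$_{\delta}$ set.

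For $(1)\Rightarrow(2)$, I would take the hypothesized nonzero $g$ witnessing that $f\mapsto g\bar{f}$ is a local multiplication, set $A=X\backslash Z(g)$, note that $A$ is a nonempty open F$_{\sigma}$ subset of $X$ by the dictionary, and invoke Lemma \ref{oldgfbar} to conclude that $A$ is an F-space. For $(2)\Rightarrow(1)$, I would take a nonempty open F$_{\sigma}$ set $A\subseteq X$ that is an F-space, apply the dictionary in the reverse direction to produce a nonzero $g\in C(X)$ with $A=X\backslash Z(g)$, and again apply Lemma \ref{oldgfbar} to conclude that $T(f)=g\bar{f}$ is a local multiplication.

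The only step requiring any real thought is the cozero-set characterization itself, and since the authors already cite it from \cite{GJ} there is no genuine obstacle; the theorem amounts to a purely topological repackaging of Lemma \ref{oldgfbar}.
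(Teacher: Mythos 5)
Your proposal is correct and follows exactly the paper's route: the paper also derives the theorem as an immediate consequence of Lemma \ref{oldgfbar} together with the cited fact that the nonempty open F$_{\sigma}$ subsets of $X$ are precisely the sets $X\backslash Z(g)$ for nonzero $g\in C(X)$. The extra detail you supply on that cozero-set dictionary is accurate but not needed beyond the citation the paper already gives.
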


\begin{corollary}
\label{rl}No nonempty open F$_{\sigma}$-subset of $X$ is an $F$-space if and
only if every $\mathbb{R}$-linear local multiplication on $X$ is a multiplication.
\end{corollary}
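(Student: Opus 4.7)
The plan is to prove the corollary as a straightforward consequence of Theorem \ref{gfbar} once we reduce an arbitrary $\mathbb{R}$-linear local multiplication to the canonical form $\alpha f+\beta\bar f$. The key reformulation, given by Theorem \ref{gfbar}, is that the topological hypothesis is equivalent to the statement: the only $g\in C(X)$ for which $f\mapsto g\bar f$ is a local multiplication is $g=0$.

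For the forward direction (topology implies every $\mathbb{R}$-linear local multiplication is a multiplication), suppose $T$ is an $\mathbb{R}$-linear local multiplication on $C(X)$. I would use the standard decomposition of an $\mathbb{R}$-linear operator into its $\mathbb{C}$-linear and conjugate-$\mathbb{C}$-linear parts by setting
\[
S_1(f)=\tfrac{1}{2}\bigl(T(f)-iT(if)\bigr),\qquad S_2(f)=\tfrac{1}{2}\bigl(T(f)+iT(if)\bigr),
\]
so that $T=S_1+S_2$, $S_1$ is $\mathbb{C}$-linear, and $S_2(if)=-iS_2(f)$. Because $T$ is a local multiplication, $T(f)$ and $T(if)$ both vanish at every zero of $f$, hence $S_1$ and $S_2$ are zero-preserving. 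A $\mathbb{C}$-linear zero-preserving map on $C(X)$ must be a multiplication: at each $x\in X$ the functional $f\mapsto S_1(f)(x)$ factors through $C(X)/\ker\mathrm{ev}_x\cong\mathbb{C}$ and is determined by its value at $1$, giving $S_1(f)(x)=S_1(1)(x)\,f(x)$. Applying this to both $S_1$ and the $\mathbb{C}$-linear map $R(f):=S_2(\bar f)$ (which is zero-preserving because $\bar f$ and $f$ vanish at the same points) yields
\[
T(f)=\alpha f+\beta\bar f,\qquad \alpha=\tfrac{T(1)-iT(i)}{2},\ \beta=\tfrac{T(1)+iT(i)}{2}.
\]
Since local multiplications on the commutative algebra $C(X)$ are closed under subtraction, the map $f\mapsto\beta\bar f=T(f)-M_{\alpha}(f)$ is itself a local multiplication. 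By the reformulation of the hypothesis above, $\beta=0$, whence $T=M_\alpha=M_{T(1)}$.

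For the converse I would argue by contrapositive: if some nonempty open F$_\sigma$-subset of $X$ is an F-space, then Theorem \ref{gfbar} furnishes a nonzero $g\in C(X)$ such that $T(f):=g\bar f$ is a local multiplication. This $T$ is patently $\mathbb{R}$-linear; were it also a multiplication it would equal $M_{T(1)}=M_g$, forcing $g\bar f=gf$ for every $f\in C(X)$, and taking $f=i$ gives $g=0$, a contradiction.

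The main obstacle, and the only substantive new ingredient beyond Theorem \ref{gfbar}, is the decomposition step together with the little lemma that a $\mathbb{C}$-linear zero-preserving map on $C(X)$ is automatically a multiplication; once these are in hand the rest is bookkeeping.
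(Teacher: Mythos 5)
Your proof is correct and takes essentially the same route as the paper: both reduce an $\mathbb{R}$-linear local multiplication to the canonical form $T(f)=\alpha f+\beta\bar{f}$ (the paper does this by normalizing $T(1)=0$ and evaluating pointwise, you via the $\mathbb{C}$-linear/conjugate-linear splitting, which is the identity already displayed in the proof of Theorem \ref{F}, and your ``little lemma'' is the paper's Corollary \ref{linear}), and then both invoke Theorem \ref{gfbar} to force $\beta=0$. The converse direction is identical in both arguments.
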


\begin{proof}
The sufficiency follows from Theorem \ref{gfbar}, since a map $T\left(
f\right)  =g\bar{f}$ with $g\neq0$ is never a multiplication. Conversely,
suppose no nonempty open F$_{\sigma}$-subset of $X$ is an $F$-space. Suppose
$T$ is a real linear local multiplication on $C\left(  X\right)  $. We may
suppose $T\left(  1\right)  =0$. Thus $T\left(  u\right)  =0$ for every $u\in
C_{R}\left(  X\right)  $. Thus $T\left(  f\right)  =T\left(  i\right)
\operatorname{Im}f$ for any $f$, $g\in C_{R}\left(  X\right)  $. If $T\left(
i\right)  \neq0$, we have%
\[
\frac{i}{2}T\left(  i\right)  \bar{f}=T\left(  i\right)  \left(
\operatorname{Im}f\right)  +M_{\frac{i}{2}T\left(  i\right)  }f
\]
is a local multiplication. It follows from Theorem \ref{gfbar} that
$X\backslash Z\left(  T\left(  i\right)  \right)  $ is a nonempty F-space,
which is a contradiction.

Thus $T\left(  i\right)  =0$ and $T=0$, i.e., $T$ is a multiplication.
\end{proof}

\begin{example}
If $X$ is an F-space, then the maximal $\eta$-subspace is the empty set. If
$X$ is the union of a compact $\eta$-space $A$ and a compact F-space $Y$, then
$A$ is the maximal $\eta$-subspace of $X$. To see this, suppose $A\subseteq K$
and $A\neq K$ and $K$ is compact. Choose $b\in K\backslash A$ and choose a
continuous function $g:X\rightarrow \left[  0,1\right]  $ such that $g|_{A}=0$
and $g\left(  b\right)  =1$. Suppose $f\in C\left(  X\right)  $. Since $Y$ is
an $F$-space, we know from Theorem \ref{F} and the Tietze extension theorem
that there is an $h\in C\left(  X\right)  $ such that $\bar{f}=hf$ on $Y.$
Since $g|A=0,$ we see that
\[
g\bar{f}=ghf
\]
on $X.$ Thus $T\left(  f\right)  =g\bar{f}$ is a local multiplication that is
not a multiplication. Thus $T\left(  f|K\right)  =\left(  g|_{K}\right)
\left(  \bar{f}|_{K}\right)  $ defines a local multiplication on $C\left(
K\right)  $ that is not a multiplication.
\end{example}

If $Y$ is a completely regular Hausdorff space, let $C_{b}\left(  Y\right)  $
denote the bounded continuous functions on $Y$. Then $C_{b}\left(  Y\right)
\cong C\left(  \beta \left(  Y\right)  \right)  $. We can say $Y$ is an $\eta
$-space when $\beta \left(  Y\right)  $ is an $\eta$-space.

\begin{theorem}
\label{lico}Suppose $I$ is an infinite set. Suppose $Y_{i}$ is a nonempty
compact $\eta$-space for each $i$ in $I$, and $Y$ is the disjoint union of
$Y_{i}$'s. Let $K=\beta \left(  Y\right)  \backslash Y$. Then $K$ is an F-space
if and only if $I$ is countable.
\end{theorem}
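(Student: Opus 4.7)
My plan is to prove the two directions of the equivalence separately.

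For the \emph{if} direction, suppose $I$ is countable. Since each $Y_{i}$ is compact and clopen in $Y=\bigsqcup_{i\in I}Y_{i}$, the space $Y$ is locally compact Hausdorff and $\sigma$-compact. By a classical theorem of Gillman--Jerison \cite[Theorem 14.27]{GJ}, the Stone-\v{C}ech remainder of every locally compact $\sigma$-compact Hausdorff space is a compact F-space, so $K=\beta(Y)\setminus Y$ is an F-space.

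For the \emph{only if} direction I prove the contrapositive: if $I$ is uncountable, then $K$ is not an F-space. The strategy is to exhibit a closed subspace of $K$ that fails to be an F-space. This suffices because every closed subspace $L$ of a compact F-space $K$ is itself an F-space: given $f\in C(L)$, Tietze-extend $f$ to $\widetilde{f}\in C(K)$; Theorem \ref{F} supplies $h\in C(K)$ with $\overline{\widetilde{f}}=h\cdot\widetilde{f}$; restricting to $L$ yields $\overline{f}=h|_{L}\cdot f$, so conjugation on $C(L)$ is a local multiplication and $L$ is an F-space. To produce the bad closed subspace, I choose a point $y_{i}\in Y_{i}$ for each $i\in I$ and set $D=\{y_{i}:i\in I\}$. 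Since each $Y_{i}$ is clopen in $Y$, $D$ is closed discrete in $Y$ with $|D|=|I|$ uncountable. Normality of $Y$ together with Tietze extension shows that $D$ is $C^{*}$-embedded in $Y$, whence $\overline{D}^{\beta(Y)}\cong\beta(D)$, so $\beta(D)\setminus D$ is a closed subspace of $K$.

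It remains to show that $\beta(D)\setminus D$ is not an F-space for any uncountable discrete $D$; I anticipate this step to be the main obstacle. My plan is to construct a bounded real $f\in C(\beta(D)\setminus D)$ that admits no factorization $f=g\cdot|f|$ with $g\in C(\beta(D)\setminus D)$, thereby violating the characterization of F-spaces used in \cite[Theorem 14.25(5)]{GJ} and Lemma \ref{oldgfbar}. Using $|D|\geq\aleph_{1}$, one fixes $\aleph_{1}$-many pairwise disjoint countably infinite subsets $\{A_{\alpha}\}_{\alpha<\omega_{1}}$ of $D$ and defines $\psi:D\to[-1,1]$ supported on $\bigcup_{\alpha}A_{\alpha}$ whose sign pattern is coordinated across the index $\alpha$; one then extends to $\widetilde{\psi}\in C(\beta(D))$ and considers its restriction $f$ to $\beta(D)\setminus D$. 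The delicate part is to arrange the construction so that a diagonal ultrafilter, picking one element from each $A_{\alpha}$, witnesses a discontinuity for every candidate $g$, forcing nonexistence of the factorization. This is precisely where the uncountability of $I$ is essential: in the $\sigma$-compact (countable) case no such diagonal ultrafilter can be assembled, which is why the positive direction succeeds via Gillman--Jerison.
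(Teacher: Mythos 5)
Your \emph{if} direction is fine: with $I$ countable, $Y$ is locally compact and $\sigma$-compact, and \cite[14.27]{GJ} gives that $\beta(Y)\setminus Y$ is a compact F-space. This is a legitimate shortcut; the paper instead proves this half by an explicit construction in $C_{b}(Y)/J$, producing for each $f$ an $h=\bigcup h_{n}$ with $|\bar{f}_{n}-h_{n}f_{n}|<3/n$ on $Y_{n}$, so that $\bar{f}-hf\in J$. The two arguments buy the same thing.

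The \emph{only if} direction, however, is built on a false intermediate claim. You propose to show that $\beta(D)\setminus D$ is not an F-space for an uncountable discrete $D$, but it \emph{is} one: $\beta(D)$ is extremally disconnected (the closure in $\beta(D)$ of any subset of $D$ is clopen), hence an F-space, and $\beta(D)\setminus D$ is a closed subspace of it since every point of $D$ is isolated in $\beta(D)$. By your own (correct) lemma that closed subspaces of compact F-spaces are F-spaces, the space you hope to exhibit as bad is in fact good, so no function $f$ without a factorization $f=g\cdot|f|$ can exist there and the "diagonal ultrafilter" construction cannot be completed. The telltale symptom is that your argument for this direction never uses the hypothesis that each $Y_{i}$ is an $\eta$-space, and the statement is false without it: taking each $Y_{i}$ to be a singleton makes $Y$ an uncountable discrete space whose remainder $K=\beta(Y)\setminus Y$ is an F-space (compare the corollary following Theorem \ref{lico}, where only countably many $Y_{i}$ are required to fail to be F-spaces). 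The paper's proof uses the $\eta$-hypothesis essentially: since no $Y_{i}$ is an F-space, one can choose $g_{i}\in C(Y_{i})$ with $\bar{g}_{i}\neq h\,g_{i}$ for every $h\in C(Y_{i})$; if $K$ were an F-space, then for each truncation $g(n)$ of $g=\bigcup g_{i}$ the relation $\overline{g(n)}-h\cdot g(n)\in J$ forces exact factorization outside a countable exceptional set $D_{g(n)}\subseteq I$, and the union of these countable sets must exhaust $I$, whence $I$ is countable. You would need to replace your discrete-remainder argument with something of this kind that exploits the failure of the F-space property on each block $Y_{i}$.
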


\begin{proof}
A function $f$ in $C_{b}\left(  Y\right)  $ corresponds to a uniformly bounded
family $\left \{  f_{i}\right \}  $ with $f_{i}$ in $C\left(  Y_{i}\right)  $.
The function $f$ is $0$ on $K$ if and only if for every $r>0$, there is a
finite subset $D$ of $I$ so that $|f_{i}|<r$ whenever $i$ is not in $D$. Let
$J$ be the set of all functions in $C_{b}\left(  Y\right)  $ which vanish on
$K$. Thus $C\left(  K\right)  \cong C_{b}\left(  Y\right)  /J$.

$\Longrightarrow$: Suppose $K$ is an F-space, thus the conjugation map is a
local multiplication on $C\left(  K\right)  \cong C_{b}\left(  Y\right)  /J$.
For every $f\in C_{b}\left(  Y\right)  $, we have $\bar{f}-h_{f}\cdot f\in J$
for some $h_{f}\in C_{b}\left(  Y\right)  $. Thus for every $n\in \mathbb{N}$,
there is a finite subset $D_{\left(  f,n\right)  }$ of $I$ so that $|\bar
{f}_{i}-h_{f_{i}}\cdot f_{i}|<\frac{1}{n}$ whenever $i$ is not in $D_{\left(
f,n\right)  }$. Set $D_{f}=%
{\textstyle \bigcup}
D_{\left(  f,n\right)  }$. Thus $D_{f}$ is countable.

Since each $Y_{i}$ is a compact $\eta$-space, the conjugation map is not a
local multiplication on $C\left(  Y_{i}\right)  $. Thus there is a $g_{i}\in
C\left(  Y_{i}\right)  $ such that $\bar{g}_{i}-h\cdot g_{i}\neq0$ for every
$h\in C\left(  Y_{i}\right)  $. Let $g=%
{\textstyle \bigcup}
g_{i}\in C\left(  Y\right)  $. For every $n\in \mathbb{N}$, set
\[
g\left(  n\right)  =\left(  -n\vee \operatorname{Re}g\right)  \wedge n+i\left(
-n\vee \operatorname{Im}g\right)  \wedge n\text{.}%
\]
Thus $g\left(  n\right)  \in C_{b}\left(  Y\right)  $ for every $n\in
\mathbb{N}$. Set $D=%
{\textstyle \bigcup}
D_{g\left(  n\right)  }$ which $D_{g\left(  n\right)  }$ is defined similarly
with $D_{f}$ for every $n\in \mathbb{N}$. Since each $D_{g\left(  n\right)  }$
is countable, we have $D$ is countable. Also $D\subseteq I$.

If there exists an $m\in I$ and $m\notin D$, we have $g_{m}\in C\left(
Y_{m}\right)  =C_{b}\left(  Y_{m}\right)  $. There is a $k\in \mathbb{N}$ so
that $g_{m}=g\left(  k\right)  _{m}$. Since $g\left(  k\right)  \in
C_{b}\left(  Y\right)  $, we have $\overline{g\left(  k\right)  }-h\cdot
g\left(  k\right)  \in J$ for some $h\in C_{b}\left(  Y\right)  $. Thus for
every $n\in \mathbb{N}$, we have $|\overline{g\left(  k\right)  }_{m}%
-h_{m}\cdot g\left(  k\right)  _{m}|<\frac{1}{n}$, since $m\notin D\supseteq
D_{g\left(  k\right)  }$. Thus $|\overline{g\left(  k\right)  }_{m}-h_{m}\cdot
g\left(  k\right)  _{m}|=0$. Since $g_{m}=g\left(  k\right)  _{m}$, we have
$\bar{g}_{m}=h_{m}\cdot g_{m}$ where $h_{m}\in C\left(  Y_{m}\right)  $. This
contradicts our choice of $g_{m}$. Thus $I=D$, i.e., we get $I$ is countable.

$\Leftarrow$: Suppose $I$ is countable. Let $I=\mathbb{N}$. For every $f\in
C_{b}\left(  Y\right)  $, $f_{n}\in C\left(  Y_{n}\right)  $ for every
$n\in \mathbb{N}$. Let $F_{n}=\left \{  x\in Y_{n}\text{: }|f_{n}\left(
x\right)  |\geq \frac{1}{n}\right \}  $, and $F_{n}$ is a closed subset of
$Y_{n}$. Thus $\frac{\bar{f}_{n}}{f_{n}}$ is continuous on $F_{n}$ and
$|\frac{\bar{f}_{n}}{f_{n}}|=1$. By the Tietze extension theorem, there is a
continuous function $h_{n}$ from $Y_{n}$ to $\mathbb{C}$ such that.
$h_{n}|_{F_{n}}=\frac{\bar{f}_{n}}{f_{n}}$ and $|h_{n}|\leq2$. Thus on
$Y_{n}\backslash F_{n}$, we have
\[
|\bar{f}_{n}-h_{n}\cdot f_{n}|\leq|\bar{f}_{n}|+|h_{n}|\cdot|f_{n}|<\frac
{1}{n}+2\cdot \frac{1}{n}=\frac{3}{n}\text{ .}%
\]
On $F_{n}$, we have $|\bar{f}_{n}-h_{n}\cdot f_{n}|=0$. Thus $|\bar{f}%
_{n}-h_{n}\cdot f_{n}|<\frac{3}{n}$ on $Y_{n}$.

Set $h=%
{\textstyle \bigcup}
h_{n}$. Since $|h_{n}|<2$ for every $n\in \mathbb{N}$, we have $h\in
C_{b}\left(  Y\right)  $. Thus $\bar{f}-h\cdot f\in J$. Thus the conjugation
map is a local multiplication on $C_{b}\left(  Y\right)  /J\cong C\left(
K\right)  $, i.e., we get $K$ is an F-space.
\end{proof}

\begin{remark}
Suppose $X$ is a compact $\eta$-space. We know that every clopen subset of $X$
is an $\eta$-space. From Theorem \ref{lico}, we get a closed subset
$K=\beta \left(  Y\right)  \backslash Y$ of a compact $\eta$-space
$\beta \left(  Y\right)  $ which is not an $\eta$-space, in fact, an F-space.
\end{remark}

\begin{corollary}
Suppose $I$ is uncountable and $Y$ is the disjoint union of nonempty compact
Hausdorff spaces $Y_{i}$'s with $i$ in $I$. Let $K=\beta \left(  Y\right)
\backslash Y$. Then $K$ is an F-space if and only if all but countable many
$Y_{i}$'s are F-spaces.
\end{corollary}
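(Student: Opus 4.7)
The plan is to reduce to Theorem \ref{lico} by replacing the hypothesis ``$Y_i$ is a compact $\eta$-space'' with ``$Y_i$ is compact and not an F-space''. The bridge is Theorem \ref{F}: a compact Hausdorff space $Z$ fails to be an F-space if and only if the conjugation map on $C(Z)$ is not a local multiplication, i.e., there exists an obstruction $g \in C(Z)$ such that $\bar g \neq h g$ for every $h \in C(Z)$. Both directions of the corollary then follow the template of the proof of Theorem \ref{lico} almost verbatim.

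For the forward direction, I would assume $K$ is an F-space and, aiming at a contradiction, that $I':=\{i\in I : Y_i \text{ is not an F-space}\}$ is uncountable. For each $i\in I'$ choose an obstruction $g_i\in C(Y_i)$ as above, and set $g_i=0$ for $i\notin I'$; let $g=\bigcup g_i\in C(Y)$. As in Theorem \ref{lico}, $g$ need not be bounded, so I pass to the truncations
\[
g(n)=(-n\vee \operatorname{Re} g)\wedge n + i(-n\vee \operatorname{Im} g)\wedge n \in C_b(Y),
\]
and for each $n\in\mathbb N$ use that conjugation is a local multiplication on $C(K)\cong C_b(Y)/J$ to produce the countable set $D_{g(n)}\subseteq I$. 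Then $D=\bigcup_n D_{g(n)}$ is countable, so there exists $m\in I'\setminus D$. Because $Y_m$ is compact we have $g_m\in C_b(Y_m)$ and $g(k)_m=g_m$ for $k\geq \|g_m\|_\infty$, and the same argument as in Theorem \ref{lico} yields $h_m\in C(Y_m)$ with $\bar g_m = h_m g_m$, contradicting the choice of $g_m$.

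For the reverse direction, partition $I=I_1\sqcup I_2$ where $I_1$ is at most countable and $Y_i$ is an F-space for every $i\in I_2$. Given $f\in C_b(Y)$, I will build $h=\bigcup h_i\in C_b(Y)$ with $\bar f - h f\in J$. For $i\in I_2$, Theorem \ref{F} provides $h_i\in C(Y_i)$ with $\bar f_i = h_i f_i$; on the cozero set of $f_i$ one has $|h_i|=1$, so replacing $h_i$ by $\tilde h_i = h_i/\max(1,|h_i|)$ preserves $\tilde h_i f_i = \bar f_i$ while ensuring $|\tilde h_i|\leq 1$. For the (countable) indices $i\in I_1$, enumerate $I_1=\{i_1,i_2,\ldots\}$ and apply the approximation scheme from the $\Leftarrow$ direction of Theorem \ref{lico}: Tietze-extend $\bar f_{i_n}/f_{i_n}$ from $F_{i_n}=\{|f_{i_n}|\geq 1/n\}$ to all of $Y_{i_n}$ with sup norm at most $2$, obtaining $h_{i_n}\in C(Y_{i_n})$ with $|\bar f_{i_n}-h_{i_n} f_{i_n}|<3/n$. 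Gluing yields $h\in C_b(Y)$ with $|h|\leq 2$, and $\bar f - h f$ vanishes identically on the $Y_i$ with $i\in I_2$ and tends to $0$ on the $Y_{i_n}$ as $n\to\infty$; hence $\bar f - h f\in J$.

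The only non-bookkeeping step is the uniform boundedness of the $h_i$ on the $I_2$-part, which is handled by the cap $\tilde h_i = h_i/\max(1,|h_i|)$; this is exactly the place where compactness of the individual $Y_i$ (needed for $g_m$ to be bounded in the necessity direction) and the $|h|=1$ rigidity of the equation $\bar f = hf$ on the cozero set (needed for the cap not to disturb the identity) are both used. Everything else is a direct transcription of the argument in Theorem \ref{lico}.
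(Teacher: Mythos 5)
Your proof is correct and follows exactly the route the paper intends: the corollary is stated without proof as a consequence of Theorem \ref{lico}, and your argument is the natural adaptation of that theorem's proof, replacing the $\eta$-space hypothesis by ``not an F-space'' via Theorem \ref{F} in the forward direction, and splitting off the countable non-F-space part in the reverse direction (your normalization $\tilde h_i = h_i/\max(1,|h_i|)$ correctly handles the uniform boundedness on the F-space part). No gaps.
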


Let $\mathbb{N}$ be all positive integers with discrete topology. Since the
conjugation map is a local multiplication on $C_{b}\left(  \mathbb{N}\right)
\cong C\left(  \beta \left(  \mathbb{N}\right)  \right)  $, we get that
$\beta \left(  \mathbb{N}\right)  $ is an F-space. We have characterized all
real linear local multiplications on $C\left(  \beta \left(  \mathbb{N}\right)
\right)  $. Next we will see how an additive local multiplication on $C\left(
\beta \left(  \mathbb{N}\right)  \right)  $ relates to a real linear local multiplication.

\begin{proposition}
\label{betan}Suppose $T$ is an additive local multiplication on $C\left(
\beta \left(  \mathbb{N}\right)  \right)  \cong C_{b}\left(  \mathbb{N}\right)
$. Then $T\left(  f\right)  =T\left(  1\right)  \cdot f$ for every $f\in
C_{R}\left(  \beta \left(  \mathbb{N}\right)  \right)  $ except finite many
points of $\mathbb{N}$.
\end{proposition}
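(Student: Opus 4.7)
The plan is to reduce to the case $T(1) = 0$ and then show that for each real $f$ the function $T(f)$ vanishes off a finite subset of $\mathbb{N}$. Replacing $T$ by $T - M_{T(1)}$ preserves additivity and the local-multiplication property (since $(T - M_{T(1)})(f) = (a_f - T(1)) \cdot f$), so one may assume $T(1) = 0$; the proposition then becomes the assertion that for each $f \in C_R(\beta(\mathbb{N}))$ there is a finite $E \subseteq \mathbb{N}$ with $T(f) \equiv 0$ on $\beta(\mathbb{N}) \setminus E$.

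Next I would establish a pointwise description of $T$ on $\mathbb{N}$. Writing $e_n$ for the indicator of $\{n\}$, the function $T(e_n) = a_{e_n} e_n$ is supported at $n$, while $T(1 - e_n) = a_{1-e_n}(1-e_n)$ vanishes at $n$; evaluating $0 = T(1) = T(e_n) + T(1-e_n)$ at $n$ forces $T(e_n)(n) = 0$, so $T(e_n) \equiv 0$. Define $\psi_n \colon \mathbb{R} \to \mathbb{C}$ by $\psi_n(t) = T(te_n)(n)$; then $\psi_n$ is additive and $\psi_n(\mathbb{Q}) = 0$. Decomposing $f = f(n) e_n + (f - f(n) e_n)$ and noting that $T(f - f(n)e_n)(n) = a_{f - f(n)e_n}(n) \cdot 0 = 0$ yields the key identity
\[
T(f)(n) = \psi_n(f(n))
\]
for every real $f \in C_R(\beta(\mathbb{N}))$ and every $n \in \mathbb{N}$.

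For the main step, suppose the conclusion fails, so that $c_k := T(f)(n_k) \neq 0$ for some $n_1 < n_2 < \cdots$. Since $\psi_{n_k}(\mathbb{Q}) = 0$ while $\psi_{n_k}(f(n_k)) = c_k \neq 0$, one has $f(n_k) \notin \mathbb{Q}$. By density of $\mathbb{Q}$ in $\mathbb{R}$, I pick $q_k \in \mathbb{Q}$ with $0 < |f(n_k) + q_k| < |c_k|/k$ and define a real bounded function by $g(n_k) = f(n_k) + q_k$, $g(n) = 0$ for $n \notin \{n_k\}$. Since $|g(n_k)| \leq \|T(f)\|_\infty / k$, the function $g$ lies in $c_0 \subseteq C(\beta(\mathbb{N}))$. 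The identity above together with $\mathbb{Q}$-linearity of $\psi_{n_k}$ gives
\[
T(g)(n_k) = \psi_{n_k}(f(n_k) + q_k) = \psi_{n_k}(f(n_k)) + \psi_{n_k}(q_k) = c_k,
\]
so writing $T(g) = a_g g$ with $a_g \in C(\beta(\mathbb{N}))$ forces $|a_g(n_k)| = |c_k|/|g(n_k)| > k$, contradicting the boundedness of $a_g$.

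Hence $T(f)(n) = 0$ off a finite set $E \subseteq \mathbb{N}$; since every free ultrafilter contains the cofinite set $\mathbb{N} \setminus E$, that set is dense in $\beta(\mathbb{N}) \setminus E$, and continuity of $T(f)$ yields $T(f) \equiv 0$ on $\beta(\mathbb{N}) \setminus E$. Reversing the initial reduction then gives $T(f) = T(1) \cdot f$ on $\beta(\mathbb{N}) \setminus E$, as required. The crux is the construction of the auxiliary $g$: the rational shift $q_k$ is invisible to $\psi_{n_k}$ (because $\psi_{n_k}$ kills $\mathbb{Q}$), so $T(g)(n_k)$ retains the value $c_k$, yet density of $\mathbb{Q}$ in $\mathbb{R}$ lets one shrink $|g(n_k)|$ arbitrarily fast; these two features are incompatible with the existence of a single bounded multiplier $a_g$, and this incompatibility is precisely what confines the exotic $\mathbb{Q}$-linear behaviour of $T$ to finitely many points of $\mathbb{N}$.
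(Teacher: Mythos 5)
Your proof is correct, and it takes a noticeably different and more streamlined route than the paper's. The paper first proves that $T$ annihilates bounded rational sequences, then that $T(f)\in C_{0}\left(\mathbb{N}\right)$ for every bounded real $f$ (via a subsequence argument), then introduces a Hamel basis $B$ for $\mathbb{R}$ over $\mathbb{Q}$ and the maps $T_{n}\left(b\right)=T\left(b\cdot I_{n}\right)\left(n\right)$, and derives a contradiction by building a bounded sequence $a$ with $\left\vert T\left(a\right)\left(n_{k}\right)\right\vert>1$, violating $T\left(a\right)\in C_{0}\left(\mathbb{N}\right)$. You bypass the Hamel basis and the $C_{0}$ lemma entirely: your identity $T\left(f\right)\left(n\right)=\psi_{n}\left(f\left(n\right)\right)$ with $\psi_{n}\left(t\right)=T\left(te_{n}\right)\left(n\right)$ isolates the same ``exotic $\mathbb{Q}$-linear part,'' and your contradiction comes from making $g\left(n_{k}\right)$ small while $T\left(g\right)\left(n_{k}\right)$ stays fixed, which forces the local multiplier $a_{g}$ to be unbounded. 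This is the dual of the paper's move (they keep the function bounded and make its image large); both exploit the fact that rational translations are invisible to $\psi_{n}$.

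One quantifier caveat: the paper's proof produces a single finite set $D=\left\{1,\ldots,n_{T}\right\}$ depending only on $T$ that works for \emph{every} $f$, and the Remark following the proposition leans on this uniformity. Your argument, as written, produces a finite exceptional set $E_{f}$ for each $f$ separately. The gap is easily closed within your framework: since $T\left(f\right)\left(n\right)=\psi_{n}\left(f\left(n\right)\right)$, it suffices to show $\psi_{n}=0$ for all but finitely many $n$; if $\psi_{n_{k}}\left(t_{k}\right)\neq0$ for infinitely many $n_{k}$, replace $t_{k}$ by $t_{k}+q_{k}$ with $q_{k}\in\mathbb{Q}$ chosen so that $0<\left\vert t_{k}+q_{k}\right\vert<\left\vert\psi_{n_{k}}\left(t_{k}\right)\right\vert/k$ and run your unbounded-multiplier argument on the resulting $c_{0}$ function. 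You should add this sentence (or restate your contradiction in terms of $\psi_{n}$ rather than a fixed $f$) to obtain the uniform version the paper actually uses.
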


\begin{proof}
Clearly, $C_{b}\left(  \mathbb{N}\right)  $ is the set of all bounded complex
sequences. Let $C_{0}\left(  \mathbb{N}\right)  $ be the set of all sequences
which converge to $0$. We may suppose $T\left(  1\right)  =0$. Thus $T\left(
r\cdot1\right)  =T\left(  r\right)  =0$ for every $r\in \mathbb{Q}$. Also for
every bounded rational sequences $\left \{  r_{n}\right \}  $, $T\left(
\left \{  r_{n}\right \}  \right)  =0$. Since for every $n\in \mathbb{N}$,
$T\left(  \left \{  r_{n}\right \}  \right)  \left(  n\right)  =T\left(
\left \{  r_{n}\right \}  -r_{n}\cdot1\right)  \left(  n\right)  =h\left(
n\right)  \cdot \left(  r_{n}-r_{n}\right)  =0$ for some $h\in C_{b}\left(
\mathbb{N}\right)  $. For every $a\in \mathbb{R}$, there is a sequence
$\left \{  a_{n}\right \}  $ in $\mathbb{Q}$ such that. $a_{n}\rightarrow a$.
Thus%
\[
T\left(  \left \{  a_{n}\right \}  -a\right)  =-T\left(  a\right)
=h\cdot \left(  \left \{  a_{n}\right \}  -a\right)
\]

for some $h\in C_{b}\left(  \mathbb{N}\right)  $. Thus for every $n\in%
\mathbb{N}
$, $-T\left(  a\right)  \left(  n\right)  =h\left(  n\right)  \cdot \left(
a_{n}-a\right)  $. Let $n\rightarrow \infty$,%
\[
|T\left(  a\right)  \left(  n\right)  |=|h\left(  n\right)  |\cdot
|a_{n}-a|\rightarrow0\text{,}%
\]

since $h$ is a bounded sequence. Thus $T\left(  a\right)  \in C_{0}\left(
\mathbb{N}\right)  $ for every $a\in \mathbb{R}$.

Let $f$ be any bounded real sequence. It has a convergent subsequence
$\left \{  f\left(  n_{k}\right)  \right \}  $ and $f\left(  n_{k}\right)
\rightarrow b$ as $k\rightarrow \infty$ for some $b\in \mathbb{R}$. Thus%
\[
T\left(  f-b\right)  =T\left(  f\right)  -T\left(  b\right)  =h_{b}%
\cdot \left(  f-b\right)
\]

for some $h_{b}\in C_{b}\left(  \mathbb{N}\right)  $. For every $k\in
\mathbb{N}$,%
\[
T\left(  f\right)  \left(  n_{k}\right)  -T\left(  b\right)  \left(
n_{k}\right)  =h_{b}\left(  n_{k}\right)  \cdot \left(  f\left(  n_{k}\right)
-b\right)  \text{.}%
\]

Let $k\rightarrow \infty$, we get $T\left(  f\right)  \left(  n_{k}\right)
\rightarrow0$.

Thus every subsequence $\left \{  T\left(  f\right)  \left(  n_{k}\right)
\right \}  $ of $T\left(  f\right)  $ has a subsequence $\left \{  T\left(
f\right)  \left(  n_{k_{j}}\right)  \right \}  $ that converges to $0$. Then
$T\left(  f\right)  \in C_{0}\left(  \mathbb{N}\right)  $ for every bounded
$f\in C_{R}\left(  \mathbb{N}\right)  $. Thus $T\left(  f\right)  $ is $0$ on
$\beta \left(  \mathbb{N}\right)  \backslash \mathbb{N}$ for every $f\in
C_{R}\left(  \beta \left(  \mathbb{N}\right)  \right)  $.

Suppose $B$ is a linear basis for $\mathbb{R}$ over $\mathbb{Q}$ and $1\in B$.
For every $n\in \mathbb{N}$, let $I_{n}$ be the sequence with $1$ in the $n$-th
place and $0$ other places. For every $n\in \mathbb{N}$, define $T_{n}%
:B\rightarrow \mathbb{C}$ by
\[
T_{n}\left(  b\right)  =T\left(  b\cdot I_{n}\right)  \left(  n\right)  \text{
for every }b\in B\text{.}%
\]

If $k\neq n$, then $T\left(  b\cdot I_{n}\right)  \left(  k\right)
=h_{b}\left(  k\right)  \cdot b\cdot I_{n}\left(  k\right)  =0$. Thus we have
$T_{n}\left(  b\right)  \cdot I_{n}=T\left(  b\cdot I_{n}\right)  $.

We have known $T_{n}\left(  1\right)  =0$ for every $n\in \mathbb{N}$. Next we
prove there is a $n_{T}\in \mathbb{N}$ such that. $T_{n}=0$ for every $n\geq
n_{T}$.

Else, we can get a subsequence $\left \{  b_{n_{k}}\right \}  \subseteq B$ such
that. $T_{n_{k}}\left(  b_{n_{k}}\right)  \neq0$ for every $k\in \mathbb{N}$.
For every $k\in \mathbb{N}$, choose an $q_{k}\in \mathbb{Q}$ such that.
$q_{k}>\frac{1}{|T_{n_{k}}\left(  b_{n_{k}}\right)  |}$. We know $q_{k}\cdot
b_{n_{k}}\in \mathbb{R}$ and choose an $r_{k}\in \mathbb{Q}$ such that.
$|q_{k}\cdot b_{n_{k}}-r_{k}|<1$. Define a sequence
\[
a\left(  n\right)  =q_{k}\cdot b_{n_{k}}-r_{k}\text{ when }n=n_{k}\text{, and
}a\left(  n\right)  =0\text{ when }n\neq n_{k}\text{.}%
\]

for every $n\in \mathbb{N}$. Thus $\left \{  a\left(  n\right)  \right \}  $ is a
real bounded sequence.

For every $k\in \mathbb{N}$,%
\begin{align*}
T\left(  a\right)   &  =T\left(  a-a\left(  n_{k}\right)  \cdot I_{n_{k}%
}\right)  +T\left(  a\left(  n_{k}\right)  \cdot I_{n_{k}}\right) \\
T\left(  a\right)  \left(  n_{k}\right)   &  =0+T\left(  a\left(
n_{k}\right)  \cdot I_{n_{k}}\right)  \left(  n_{k}\right) \\
&  =T\left(  \left(  q_{k}\cdot b_{n_{k}}-r_{k}\right)  \cdot I_{n_{k}%
}\right)  \left(  n_{k}\right) \\
&  =q_{k}\cdot T\left(  b_{n_{k}}\cdot I_{n_{k}}\right)  \left(  n_{k}\right)
\\
&  =q_{k}\cdot T_{n_{k}}\left(  b_{n_{k}}\right)  \text{.}%
\end{align*}

since $q_{k}$, $r_{k}\in \mathbb{Q}$. But $|q_{k}\cdot T_{n_{k}}\left(
b_{n_{k}}\right)  |>1$ for every $k\in \mathbb{N}$. Thus $T\left(  a\right)
\notin C_{0}\left(  \mathbb{N}\right)  $. This is a contradiction.

Thus there is a $n_{T}\in \mathbb{N}$ such that. $T_{n}=0$ for every $n\geq
n_{T}$. Now for any bounded $f\in C_{R}\left(  \mathbb{N}\right)  $,%
\begin{align*}
T\left(  f\right)  \left(  n\right)   &  =T\left(  f-f\left(  n\right)  \cdot
I_{n}\right)  \left(  n\right)  +T\left(  f\left(  n\right)  \cdot
I_{n}\right)  \left(  n\right) \\
&  =0+0=0
\end{align*}

for every $n\geq n_{T}$. Let $D=\left \{  n\in \mathbb{N}\text{: }1\leq n\leq
n_{T}\right \}  $. Thus $T\left(  f\right)  =0$ on $\beta \left(  \mathbb{N}%
\right)  \backslash D$ for every $f\in C_{R}\left(  \beta \left(
\mathbb{N}\right)  \right)  $.

Then $T\left(  f\right)  =T\left(  1\right)  \cdot f$ on $\beta \left(
\mathbb{N}\right)  \backslash D$ for every $f\in C_{R}\left(  \beta \left(
\mathbb{N}\right)  \right)  $.
\end{proof}

\begin{remark}
Note that the $n_{T}$ is in terms of $T$, so we cannot say that there is a
finite subset $D$ of $\mathbb{N}$ such that for every additive local
multiplication $T$ on $C\left(  \beta \left(  \mathbb{N}\right)  \right)  $,
$T\left(  f\right)  =T\left(  1\right)  \cdot f$ on $\beta \left(
\mathbb{N}\right)  \backslash D$ for every $f\in C_{R}\left(  \beta \left(
\mathbb{N}\right)  \right)  $. In fact, we can get that for every additive
local multiplication $T$ on $C\left(  \beta \left(  \mathbb{N}\right)  \right)
$, $T\left(  f\right)  =T\left(  1\right)  \cdot f$ on $\beta \left(
\mathbb{N}\right)  \backslash \mathbb{N}$ for every $f\in C_{R}\left(
\beta \left(  \mathbb{N}\right)  \right)  $.
\end{remark}

\section{$\upsilon$-Spaces}

\begin{definition}
Suppose $X$ is a compact Hausdorff space. Call a map $T$ on $C\left(
X\right)  $ ($C_{R}\left(  X\right)  $) \textbf{zero-preserving} if for every
$f\in C\left(  X\right)  $($C_{R}\left(  X\right)  $) and every $x\in X$,
$f\left(  x\right)  =0$ implies $T\left(  f\right)  \left(  x\right)  =0$.
\end{definition}

The zero-preserving property is weaker than being a local multiplication.
Every local multiplication has this property. However, in general, not every
zero-preserving additive map is a local multiplication. For example, any map
$T$ of the form
\[
T\left(  f\right)  =T\left(  1\right)  \operatorname{Re}\left(  f\right)
+T\left(  i\right)  \operatorname{Im}\left(  f\right)  \text{.}%
\]

For every additive $T$ on $C\left(  X\right)  $ there corresponds four
additive maps on $C_{R}\left(  X\right)  $, namely,
\begin{align*}
T_{1}\left(  u\right)   &  =\operatorname{Re}\left(  T(u)\right) \\
T_{2}\left(  u\right)   &  =\operatorname{Im}\left(  T(u)\right) \\
T_{3}\left(  u\right)   &  =\operatorname{Re}\left(  T\left(  iu\right)
\right) \\
T_{4}\left(  u\right)   &  =\operatorname{Im}\left(  T\left(  iu\right)
\right)  \text{ for every }u\in C_{R}\left(  X\right)  \text{,}%
\end{align*}
and it turns out that $T$ is zero-preserving if and only if each of these four
maps is zero-preserving and $T$ has the form
\[
T\left(  f\right)  =T_{1}\left(  \operatorname{Re}\left(  f\right)  \right)
+iT_{2}\left(  \operatorname{Re}\left(  f\right)  \right)  +T_{3}\left(
\operatorname{Im}\left(  f\right)  \right)  +iT_{4}\left(  \operatorname{Im}%
\left(  f\right)  \right)  \text{.}%
\]

\begin{definition}
We call a compact Hausdorff space $X$ a $\mathbf{\upsilon}$\textbf{-space} if
every zero-preserving additive map on $C_{R}\left(  X\right)  $ is a
multiplication. Equivalently $X$ is a $\upsilon$-space if and only if every
zero-preserving map on $C\left(  X\right)  $ has the form%
\[
T\left(  f\right)  =T\left(  1\right)  \operatorname{Re}\left(  f\right)
+T\left(  i\right)  \operatorname{Im}\left(  f\right)  \text{.}%
\]

\end{definition}

The relationship between local multiplications and zero-preserving maps is
given in the following lemma.

\begin{lemma}
Suppose $T$ is any map on $C\left(  X\right)  $. Then $T$ is a local
multiplication if and only if $T$ leaves invariant every ideal of $C\left(
X\right)  $ and $T$ is zero-preserving if and only if $T$ leaves invariant
every closed ideal of $C\left(  X\right)  $.
\end{lemma}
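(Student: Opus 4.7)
The plan is to prove both equivalences by directly exploiting the well-known correspondence between ideals of $C(X)$ and algebraic/topological data on $X$. Neither direction should require any deep structural argument; in each case, one direction is essentially a rephrasing and the other follows by testing against a carefully chosen single ideal.

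For the first equivalence, I would observe that a local multiplication $T$ sends $f$ into the principal ideal $fC(X) = \{fg : g\in C(X)\}$, and every ideal $I$ containing $f$ contains $fC(X)$; so $T(f)=a_f f\in I$, giving forward-invariance of every ideal. Conversely, given only the invariance assumption, apply it to the principal ideal $fC(X)$ itself: $T(f)\in fC(X)$ means $T(f)=a_f f$ for some $a_f\in C(X)$, which is exactly the local multiplication condition. So the proof reduces to a one-line observation about principal ideals.

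For the second equivalence, I would use the standard fact that the closed ideals of $C(X)$ are precisely the sets $I_E = \{f\in C(X): f|_E\equiv 0\}$ as $E$ ranges over the closed subsets of $X$, and in particular the maximal ideals $M_x = I_{\{x\}}$ are closed. If $T$ is zero-preserving and $I=I_E$ is a closed ideal, then for $f\in I_E$ and $x\in E$ we have $f(x)=0$, hence $T(f)(x)=0$; so $T(f)|_E\equiv 0$ and $T(f)\in I_E$. Conversely, if $T$ leaves every closed ideal invariant and $f(x)=0$ for some $x\in X$, applying the hypothesis to the closed maximal ideal $M_x$ gives $f\in M_x$, hence $T(f)\in M_x$, i.e.\ $T(f)(x)=0$.

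The only potential subtlety is the appeal to the description of closed ideals of $C(X)$, but for our purposes only the maximal point-ideals $M_x$ and the ideals $I_E$ are used, and both are manifestly closed without invoking the full classification. Thus I do not anticipate any real obstacle; the lemma is primarily a clarifying observation that will make later arguments comparing the two notions easier to phrase.
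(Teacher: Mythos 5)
Your proof is correct and follows essentially the same route as the paper: principal ideals $fC(X)$ for the local-multiplication equivalence, and the identification of closed ideals with the sets $I_E$ of functions vanishing on a closed set $E$ (together with the point ideals $M_x$) for the zero-preserving equivalence. One small caveat: your closing remark that the full classification of closed ideals is not needed is inaccurate for the forward direction of the second equivalence, since showing that an \emph{arbitrary} closed ideal is invariant does require knowing it has the form $I_E$ --- but you do invoke that fact explicitly in the body of the argument, so the proof stands.
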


\begin{proof}
Suppose $T$ is a local multiplication and $I$ is an ideal of $C\left(
X\right)  $. Then for every $f\in I$, $T\left(  f\right)  =h_{f}\cdot f\in I$.
Now suppose $T$ leaves invariant every ideal of $C\left(  X\right)  $. Suppose
$f\in C\left(  X\right)  $ and $I_{f}$ is an ideal generated by $f$. Since
$T\left(  f\right)  \in I_{f}$, $T\left(  f\right)  =h\cdot f$ for some $h\in
C\left(  X\right)  $. $T$ is a local multiplication.

Suppose $T$ is zero-preserving and $I$ is a closed ideal of $C\left(
X\right)  $. Then there is a closed subset $K$ of $X$ such that $I$ is the set
of all functions in $C\left(  X\right)  $ which vanish on $K$. For every $f\in
I$, $f=0$ on $K$. Thus $T\left(  f\right)  =0$ on $K$, i.e., $T\left(
f\right)  \in I$. Now suppose $T$ leaves invariant every closed ideal of
$C\left(  X\right)  $. For every $f\in C\left(  X\right)  $ and every $x\in
X$, if $f\left(  x\right)  =0$, let $I_{x}$ be the ideal of all functions in
$C\left(  X\right)  $ which vanish on $\left \{  x\right \}  $. Thus $I_{x}$ is
closed. Thus $T\left(  f\right)  \in I_{x}$, $T\left(  f\right)  \left(
x\right)  =0$, i.e., $T$ is zero-preserving.
\end{proof}

Here we prove a purely algebraic result that relates to zero-preserving maps,
since the evaluation maps at points of $X$ are algebra homomorphisms of
$C\left(  X\right)  $ to $\mathbb{C}$ and $C_{R}\left(  X\right)  $ into
$\mathbb{R}$, respectively.

\begin{theorem}
\label{zpAlg}Suppose $\mathcal{A}$ is an algebra with identity $1$ over a
field $\mathbb{F}$. Suppose also that, whenever $x\in \mathcal{A}$ and $x\neq
0$, there is an algebra homomorphism $h$ from $\mathcal{A}$ to $\mathbb{F}$
such that $h\left(  x\right)  \neq0$. Let $S$ be the set of unital algebra
homomorphism from $\mathcal{A}$ to $\mathbb{F}$. Suppose $T$ is a linear map
from $\mathcal{A}$ to $\mathcal{A}$ such that, for every $a$ in $\mathcal{A}$
and every $s$ in $S$, $s\left(  a\right)  =0$ implies $s\left(  T\left(
a\right)  \right)  =0$. Then $T$ is left multiplication by $T\left(  1\right)
$.
\end{theorem}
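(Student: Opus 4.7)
The plan is to show that for every $s \in S$ and every $a \in \mathcal{A}$, the element $T(a) - T(1)a$ lies in $\ker s$, and then use the separation hypothesis to conclude that $T(a) - T(1)a = 0$.

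First I would observe that the hypothesis really forces the separating homomorphisms to be unital, so that the given set $S$ separates points of $\mathcal{A}$. Indeed, if $h\colon \mathcal{A}\to \mathbb{F}$ is any algebra homomorphism, then $h(1)=h(1)^{2}$, so $h(1)\in\{0,1\}$; if $h(1)=0$ then $h(a)=h(a)h(1)=0$ for every $a$, so $h$ cannot separate any nonzero element. Thus in the hypothesis ``there is an algebra homomorphism $h$ with $h(x)\neq 0$'' we may take $h\in S$, and consequently $\bigcap_{s\in S}\ker s=\{0\}$.

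Next, fix $a\in\mathcal{A}$ and $s\in S$. The element $a-s(a)\cdot 1$ lies in $\ker s$, so by the zero-preserving hypothesis $s\bigl(T(a-s(a)\cdot 1)\bigr)=0$. Using $\mathbb{F}$-linearity of $T$ this reads
\[
s(T(a))=s(a)\,s(T(1)).
\]
Since $s$ is an algebra homomorphism into the commutative field $\mathbb{F}$, the right-hand side equals $s(T(1)\cdot a)$, so
\[
s\bigl(T(a)-T(1)a\bigr)=0\quad\text{for every } s\in S.
\]
By the separation property established in the first step, $T(a)-T(1)a=0$, i.e.\ $T(a)=T(1)a$. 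Since $a$ was arbitrary, $T=L_{T(1)}$.

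There is essentially no obstacle here: the entire argument consists of passing an element through its value at a homomorphism and using that unital algebra homomorphisms into $\mathbb{F}$ carry multiplication in $\mathcal{A}$ to multiplication in $\mathbb{F}$. The only mildly subtle point is the opening observation that a nonzero algebra homomorphism must be unital, which is what lets us replace the existential hypothesis about ``some homomorphism $h$'' by a statement about the family $S$ actually appearing in the zero-preserving condition.
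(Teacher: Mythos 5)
Your proof is correct and takes essentially the same route as the paper: both arguments apply the zero-preserving hypothesis to $a - s(a)\cdot 1$ and use linearity to obtain $s\left(  T\left(  a\right)  \right)  = s\left(  a\right)  s\left(  T\left(  1\right)  \right)$, the paper merely phrasing this as a proof by contradiction while you argue directly via $\bigcap_{s\in S}\ker s=\left\{  0\right\}$. Your opening observation that a nonzero algebra homomorphism into $\mathbb{F}$ is automatically unital (so the separating homomorphism may be taken in $S$) is a small but worthwhile detail that the paper's proof leaves implicit.
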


\begin{proof}
If there exists an $a\in \mathcal{A}$ such that. $T\left(  a\right)  \neq
T\left(  1\right)  a$, i.e., $T\left(  a\right)  -T\left(  1\right)  a\neq0$.
From the assumption, there is an algebra homomorphism $h$ from $\mathcal{A}$
to $\mathbb{F}$ such that. $h\left(  T\left(  a\right)  -T\left(  1\right)
a\right)  \neq0$. But%
\begin{align*}
h\left(  T\left(  a\right)  -T\left(  1\right)  a\right)   &  =h\left(
T\left(  a\right)  \right)  -h\left(  T\left(  1\right)  \right)  \cdot
h\left(  a\right) \\
&  =h\left(  T\left(  a\right)  \right)  -h\left(  h\left(  a\right)  \cdot
T\left(  1\right)  \right) \\
&  =h\left(  T\left(  a\right)  -h\left(  a\right)  \cdot T\left(  1\right)
\right) \\
&  =h\left(  T\left(  a\right)  -T\left(  h\left(  a\right)  \cdot1\right)
\right) \\
&  =h\left(  T\left(  a-h\left(  a\right)  \cdot1\right)  \right)  \text{.}%
\end{align*}

Since $h\in S$ and
\[
h\left(  a-h\left(  a\right)  \cdot1\right)  =h\left(  a\right)  -h\left(
a\right)  \cdot h\left(  1\right)  =0\text{,}%
\]
we have
\[
h\left(  T\left(  a-h\left(  a\right)  \cdot1\right)  \right)  =0.
\]
This is a contradiction.
\end{proof}

\begin{corollary}
\label{linear}Suppose $X$ is a compact Hausdorff space. Then any linear map on
$C\left(  X\right)  $ ($\mathbb{R}$-linear map on $C_{R}\left(  X\right)  $)
that is zero-preserving is a multiplication.
\end{corollary}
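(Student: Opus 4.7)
The plan is to deduce this corollary directly from Theorem \ref{zpAlg} by recognizing that, for a compact Hausdorff space $X$, the point-evaluation functionals $\mathrm{ev}_x : f \mapsto f(x)$ are exactly the data needed to invoke the theorem. I would set $\mathcal{A} = C(X)$ with $\mathbb{F} = \mathbb{C}$ (and separately $\mathcal{A} = C_R(X)$ with $\mathbb{F} = \mathbb{R}$), and check that each $\mathrm{ev}_x$ is a unital algebra homomorphism into the appropriate field.

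First I would verify the separation hypothesis of Theorem \ref{zpAlg}: if $f \in C(X)$ is nonzero, then by definition there is some $x \in X$ with $f(x) \neq 0$, so $\mathrm{ev}_x$ is a nonzero (hence unital) homomorphism with $\mathrm{ev}_x(f) \neq 0$. Next I would verify the zero-preserving condition with respect to these homomorphisms: the assumption on $T$ says $f(x) = 0 \Rightarrow T(f)(x) = 0$, which is precisely $\mathrm{ev}_x(f) = 0 \Rightarrow \mathrm{ev}_x(T(f)) = 0$. A small subtlety is that Theorem \ref{zpAlg} formally quantifies over \emph{all} unital algebra homomorphisms to $\mathbb{F}$, while we only have the condition at evaluation maps. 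However, inspection of the proof of Theorem \ref{zpAlg} shows it only uses the zero-preserving condition at the one separating homomorphism $h$ produced from hypothesis 1; in our setting that $h$ can always be chosen to be an $\mathrm{ev}_x$, so the argument goes through unchanged. (Equivalently, one can invoke the standard fact that every unital algebra homomorphism from $C(X)$ to $\mathbb{C}$, or from $C_R(X)$ to $\mathbb{R}$, is a point-evaluation.)

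With both hypotheses verified, Theorem \ref{zpAlg} yields $T = L_{T(1)}$. Since $C(X)$ and $C_R(X)$ are commutative, left multiplication by $T(1)$ coincides with multiplication by $T(1)$, so $T = M_{T(1)}$ is a multiplication, as required. There is no genuine obstacle in this proof; the whole content of the corollary is packaging Theorem \ref{zpAlg} for the commutative algebra $C(X)$ by identifying the set $S$ of unital homomorphisms with the set of point-evaluations on $X$.
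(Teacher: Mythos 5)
Your proposal is correct and is exactly the paper's intended argument: the corollary is stated as an immediate application of Theorem \ref{zpAlg} with the point evaluations serving as the separating unital homomorphisms, as the paper signals in the sentence preceding that theorem. Your extra care about the quantification over all of $S$ versus only the evaluation maps is a reasonable (and correctly resolved) refinement of a point the paper leaves implicit.
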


\begin{corollary}
Suppose $X$ is a compact Hausdorff space. Then $T$ is an $\mathbb{R}$-linear
zero-preserving map on $C\left(  X\right)  $\bigskip \ if and only if, $T$ has
the form%
\[
T\left(  f\right)  =T\left(  1\right)  \operatorname{Re}\left(  f\right)
+T\left(  i\right)  \operatorname{Im}\left(  f\right)  \text{.}%
\]

\end{corollary}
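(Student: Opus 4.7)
The plan is to prove the two directions separately. The reverse implication is a short verification: if $T(f) = T(1)\operatorname{Re}(f) + T(i)\operatorname{Im}(f)$, then $T$ is $\mathbb{R}$-linear because $\operatorname{Re}$ and $\operatorname{Im}$ are $\mathbb{R}$-linear functionals, and $T$ is zero-preserving because $f(x)=0$ forces $\operatorname{Re}(f)(x)=\operatorname{Im}(f)(x)=0$, hence $T(f)(x)=0$.

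For the forward direction, the natural strategy is to reduce to the real scalar case that was already handled in Corollary \ref{linear}. Given an $\mathbb{R}$-linear zero-preserving $T$ on $C(X)$, I would form the four auxiliary maps $T_1,T_2,T_3,T_4:C_R(X)\to C_R(X)$ exactly as displayed in the paper before the definition of $\upsilon$-space, namely $T_1(u)=\operatorname{Re}T(u)$, $T_2(u)=\operatorname{Im}T(u)$, $T_3(u)=\operatorname{Re}T(iu)$, $T_4(u)=\operatorname{Im}T(iu)$. Each $T_j$ is $\mathbb{R}$-linear (since $T$ is, and taking real/imaginary parts is $\mathbb{R}$-linear), and each is zero-preserving (if $u(x)=0$ then $T(u)(x)=0$ by hypothesis, whence $T_1(u)(x)=T_2(u)(x)=0$; similarly $iu$ vanishes at $x$, so $T_3(u)(x)=T_4(u)(x)=0$).

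Corollary \ref{linear} then applies to each $T_j$ acting on $C_R(X)$, giving $T_j(u)=T_j(1)\cdot u$ for all $u\in C_R(X)$. Writing $f=u+iv$ with $u=\operatorname{Re}(f),\ v=\operatorname{Im}(f)$ and using $\mathbb{R}$-linearity of $T$, I get
\[
T(f)=T(u)+T(iv)=\bigl(T_1(1)+iT_2(1)\bigr)u+\bigl(T_3(1)+iT_4(1)\bigr)v.
\]
The identification $T(1)=T_1(1)+iT_2(1)$ and $T(i)=T_3(1)+iT_4(1)$ is immediate from the definitions, yielding $T(f)=T(1)\operatorname{Re}(f)+T(i)\operatorname{Im}(f)$.

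There is no real obstacle here; the whole content has been packaged into Corollary \ref{linear} via Theorem \ref{zpAlg}. The only thing to be careful about is making sure each $T_j$ lands inside $C_R(X)$ (which it does, since real and imaginary parts of continuous complex functions are real continuous), and that the bookkeeping in the final assembly matches $T(1)$ and $T(i)$ correctly; this is a direct computation rather than a subtle point.
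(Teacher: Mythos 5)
Your proof is correct and follows exactly the route the paper intends: the paper states this as an immediate corollary of Theorem \ref{zpAlg}/Corollary \ref{linear} via the four-map decomposition $T_1,\dots,T_4$ displayed at the start of Section 3 (and carried out explicitly in the proof of $(5)\Rightarrow(4)$ of Theorem \ref{uCharacterization}). Your write-up simply supplies the omitted bookkeeping, and it checks out.
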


The equivalence of $\left(  1\right)  $ and $\left(  5\right)  $ in the
following theorem, shows that if $X$ is an $\upsilon$-space, then $X$ is like
a "real" $\eta$-space.

\begin{theorem}
\label{uCharacterization}Suppose $X$ is a compact Hausdorff space. The
following are equivalent:

\begin{enumerate}
\item $X$ is a $\upsilon$-space.

\item Every additive zero-preserving map on $C\left(  X\right)  $ is
$\mathbb{R}$-linear.

\item Every additive zero-preserving map on $C\left(  X\right)  $ is continuous.

\item Every additive zero-preserving map on $C\left(  X\right)  $ has the form%
\[
T\left(  f\right)  =T\left(  1\right)  \operatorname{Re}\left(  f\right)
+T\left(  i\right)  \operatorname{Im}\left(  f\right)  \text{ for every }f\in
C\left(  X\right)  \text{.}%
\]

\item Every additive zero-preserving map on $C_{R}\left(  X\right)  $ is a multiplication.
\end{enumerate}
\end{theorem}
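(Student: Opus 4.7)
The plan is to establish the following implications: $(1) \Leftrightarrow (5)$ directly from the definition, $(4) \Leftrightarrow (5)$ by transferring additive zero-preserving maps between $C_R(X)$ and $C(X)$, and the loop $(4) \Rightarrow (3) \Rightarrow (2) \Rightarrow (4)$. Together these close all five equivalences. Since the definition of $\upsilon$-space already packages $(1) \Leftrightarrow (5)$, the real content is the real/complex translation and a short linearity argument.

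For $(5) \Rightarrow (4)$, I would take any additive zero-preserving $T$ on $C(X)$ and use the decomposition $T \leftrightarrow (T_1, T_2, T_3, T_4)$ set up just before the theorem. Each $T_j$ is additive and zero-preserving on $C_R(X)$, so by $(5)$ each is a multiplication $T_j(u) = T_j(1)u$. Substituting into the identity $T(f) = T_1(\operatorname{Re} f) + iT_2(\operatorname{Re} f) + T_3(\operatorname{Im} f) + iT_4(\operatorname{Im} f)$ and collecting coefficients gives $T(f) = T(1)\operatorname{Re}(f) + T(i)\operatorname{Im}(f)$, using $T(1) = T_1(1) + iT_2(1)$ and $T(i) = T_3(1) + iT_4(1)$. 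For the converse $(4) \Rightarrow (5)$, given an additive zero-preserving $S$ on $C_R(X)$, I would complexify to $\widetilde{S}\colon C(X) \to C(X)$ by $\widetilde{S}(u+iv) = S(u) + iS(v)$; additivity is immediate, and zero-preservation follows from the equivalence $f(x) = 0 \Longleftrightarrow \operatorname{Re}(f)(x) = \operatorname{Im}(f)(x) = 0$. Applying $(4)$ to $\widetilde{S}$, together with the identifications $\widetilde{S}(1) = S(1)$ and $\widetilde{S}(i) = iS(1)$, yields $S(u) + iS(v) = S(1)u + iS(1)v$, hence $S = M_{S(1)}$.

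The remaining implications are short. Both $(4) \Rightarrow (2)$ and $(4) \Rightarrow (3)$ are immediate, since any map of the given form is a fixed $\mathbb{R}$-linear combination of the continuous operations $\operatorname{Re}$ and $\operatorname{Im}$ with fixed multiplications. For $(3) \Rightarrow (2)$, an additive map is automatically $\mathbb{Q}$-linear, and one upgrades to $\mathbb{R}$-linearity by approximating $r \in \mathbb{R}$ by rationals $r_n$ and combining $T(r_n f) = r_n T(f)$ with continuity of $T$. Finally, $(2) \Rightarrow (4)$ is exactly the corollary stated immediately after Corollary \ref{linear}. The only place requiring any care is the $(4) \Leftrightarrow (5)$ translation: one must verify that the complexified map $\widetilde{S}$ is indeed zero-preserving, and correctly identify $\widetilde{S}(1)$ and $\widetilde{S}(i)$ so that comparing real and imaginary parts cleanly recovers $S(u) = S(1) u$. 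No other step presents a genuine difficulty.
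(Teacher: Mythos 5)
Your proposal is correct and uses essentially the same ingredients as the paper's proof: the $T_{1},\dots,T_{4}$ decomposition for $(5)\Rightarrow(4)$, the complexification $\widetilde{S}(u+iv)=S(u)+iS(v)$ to pass between $C_{R}(X)$ and $C(X)$ (the paper deploys this same map in its $(2)\Rightarrow(1)$ step), rational approximation plus continuity for $(3)\Rightarrow(2)$, and the algebraic Corollary \ref{linear} to convert $\mathbb{R}$-linearity into the multiplication form. The only difference is the arrangement of the implication graph (you close the loop $(4)\Rightarrow(3)\Rightarrow(2)\Rightarrow(4)$ and handle $(4)\Leftrightarrow(5)$ separately, whereas the paper runs one cycle through all five statements), which does not change the substance.
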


\begin{proof}
$\left(  5\right)  \Rightarrow \left(  4\right)  $: We have already known that
for every additive $T$ on $C\left(  X\right)  $ there corresponds four
additive maps on $C_{R}\left(  X\right)  $, namely,
\begin{align*}
T_{1}\left(  u\right)   &  =\operatorname{Re}\left(  T(u)\right) \\
T_{2}\left(  u\right)   &  =\operatorname{Im}\left(  T(u)\right) \\
T_{3}\left(  u\right)   &  =\operatorname{Re}\left(  T\left(  iu\right)
\right) \\
T_{4}\left(  u\right)   &  =\operatorname{Im}\left(  T\left(  iu\right)
\right)  \text{ for every }u\in C_{R}\left(  X\right)  \text{,}%
\end{align*}
and it turns out that $T$ is zero-preserving if and only if each of these four
maps is zero-preserving and $T$ has the form
\[
T\left(  f\right)  =T_{1}\left(  \operatorname{Re}\left(  f\right)  \right)
+iT_{2}\left(  \operatorname{Re}\left(  f\right)  \right)  +T_{3}\left(
\operatorname{Im}\left(  f\right)  \right)  +iT_{4}\left(  \operatorname{Im}%
\left(  f\right)  \right)  \text{.}%
\]

Since every additive zero-preserving map on $C_{R}\left(  X\right)  $ is a
multiplication. We have%
\begin{align*}
T\left(  f\right)   &  =T_{1}\left(  1\right)  \operatorname{Re}\left(
f\right)  +iT_{2}\left(  1\right)  \operatorname{Re}\left(  f\right)
+T_{3}\left(  1\right)  \operatorname{Im}\left(  f\right)  +iT_{4}\left(
1\right)  \operatorname{Im}\left(  f\right) \\
&  =T\left(  1\right)  \operatorname{Re}\left(  f\right)  +T\left(  i\right)
\operatorname{Im}\left(  f\right)  \text{.}%
\end{align*}

$\left(  4\right)  \Rightarrow \left(  3\right)  $: Suppose $\left \{
f_{n}\right \}  $ is a sequence in $C\left(  X\right)  $ and $f_{n}\rightarrow
f$. Then $T\left(  f_{n}\right)  =T\left(  1\right)  \operatorname{Re}\left(
f_{n}\right)  +T\left(  i\right)  \operatorname{Im}\left(  f_{n}\right)
\rightarrow T\left(  1\right)  \operatorname{Re}\left(  f\right)  +T\left(
i\right)  \operatorname{Im}\left(  f\right)  =T\left(  f\right)  $. Thus $T$
is continuous.

$\left(  3\right)  \Rightarrow \left(  2\right)  $: Suppose $a\in \mathbb{R}$
and $\left \{  r_{n}\right \}  $ is a rational sequence which converges to $a$.
Then $r_{n}\cdot f\rightarrow a\cdot f$. Thus%
\[
T\left(  a\cdot f\right)  =\lim T\left(  r_{n}\cdot f\right)  =\lim
r_{n}T\left(  f\right)  =aT\left(  f\right)  \text{.}%
\]

Thus $T$ is $\mathbb{R}$-linear.

$\left(  2\right)  \Rightarrow \left(  1\right)  $: Suppose $T$ is an additive
zero-preserving map on $C_{R}\left(  X\right)  $. If $T$ is not $\mathbb{R}%
$-linear. Define $T_{0}$ on $C\left(  X\right)  $ by $T_{0}\left(  f\right)
=T\left(  \operatorname{Re}\left(  f\right)  \right)  +iT\left(
\operatorname{Im}\left(  f\right)  \right)  $. Thus $T_{0}$ is an additive
zero-preserving map on $C\left(  X\right)  $ which is not $\mathbb{R}$-linear.
This is a contradiction. Then $T$ is $\mathbb{R}$-linear. From Corollary
\ref{linear}, $T$ is a multiplication. Thus $X$ is a $\upsilon$-space.

$\left(  1\right)  \Rightarrow \left(  5\right)  $: This follows immediately
from the definition of $\upsilon$-space.
\end{proof}

\begin{example}
Suppose $X=\left \{  a\right \}  $. Then $C_{R}\left(  X\right)  $ is isomorphic
to $\mathbb{R}$. Every additive map on $\mathbb{R}$ is zero-preserving. The
number of additive (i.e., $\mathbb{Q}$-linear) maps on $\mathcal{\mathbb{R}}$
is $2^{2^{\aleph_{0}}}$ but the number of multiplications is $2^{\aleph_{0}}$,
so $X$ is not a $\upsilon$-space.
\end{example}

As in the $\eta$-space case, we get the following for free.

\begin{theorem}
Suppose $X$ is the disjoint union of compact Hausdorff spaces $Y$ and $Z$.
Then $X$ is a $\upsilon$-space if and only if $Y$ and $Z$ are $\upsilon$-spaces.
\end{theorem}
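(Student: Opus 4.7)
The strategy is to mirror the proof of Theorem \ref{direct}, exploiting Theorem \ref{uCharacterization}(5) to rephrase the $\upsilon$-space condition purely in terms of $C_{R}$: $X$ is a $\upsilon$-space if and only if every additive zero-preserving map on $C_{R}(X)$ is a multiplication. Since $Y$ and $Z$ are clopen subspaces of $X$, the characteristic functions $\chi_{Y}$ and $\chi_{Z}$ lie in $C_{R}(X)$, and $f \mapsto (f|_{Y}, f|_{Z})$ gives a ring isomorphism $C_{R}(X) \cong C_{R}(Y) \oplus C_{R}(Z)$, with inverse $(g, h) \mapsto \widetilde{g} + \widetilde{h}$ (extensions by zero across the other clopen piece).

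Under this decomposition, I will show that an additive map $T$ on $C_{R}(X)$ is zero-preserving if and only if it is the direct sum $T = T_{Y} \oplus T_{Z}$ of additive zero-preserving maps $T_{Y}$ on $C_{R}(Y)$ and $T_{Z}$ on $C_{R}(Z)$. One direction is immediate. For the other, given additive zero-preserving $T$ on $C_{R}(X)$ and $f \in C_{R}(X)$, I will write $f = f\chi_{Y} + f\chi_{Z}$; since $f\chi_{Y}$ vanishes on $Z$, the zero-preserving property forces $T(f\chi_{Y})$ to vanish on $Z$, and symmetrically for $Z$. Then setting $T_{Y}(g) := T(\widetilde{g})|_{Y}$ and analogously $T_{Z}$ yields well-defined additive maps that inherit the zero-preserving property (if $g(y) = 0$ then $\widetilde{g}(y) = 0$, whence $T(\widetilde{g})(y) = 0$), and additivity of $T$ shows $T = T_{Y} \oplus T_{Z}$.

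Multiplications behave the same way: $M_{a}$ on $C_{R}(X)$ equals $M_{a|_{Y}} \oplus M_{a|_{Z}}$, and conversely any pair of multiplications on the factors assembles into a single multiplication on $C_{R}(X)$ via the ring isomorphism. Thus every additive zero-preserving map on $C_{R}(X)$ is a multiplication if and only if the same holds on both $C_{R}(Y)$ and $C_{R}(Z)$, which by Theorem \ref{uCharacterization}(5) is precisely the claimed equivalence. No serious obstacle arises; the only point requiring verification is that the zero-preserving property localizes correctly to the clopen pieces, and this is forced by the clopenness of $Y$ and $Z$ together with additivity.
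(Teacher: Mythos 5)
Your proof is correct and follows essentially the same route the paper intends: the paper proves the $\upsilon$-space version "for free" by the same direct-sum decomposition $C_{R}(X)\cong C_{R}(Y)\oplus C_{R}(Z)$ used in Theorem \ref{direct}, with zero-preserving additive maps (and multiplications) corresponding to pairs of such maps on the summands. Your verification that the zero-preserving property localizes to the clopen pieces is exactly the detail the paper leaves implicit.
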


\begin{corollary}
A $\upsilon$-space has no isolated points.
\end{corollary}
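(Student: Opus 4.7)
The plan is to mimic the proof of the analogous corollary for $\eta$-spaces immediately after Theorem \ref{direct}, using the disjoint union theorem for $\upsilon$-spaces together with the singleton example.

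Suppose for contradiction that $X$ is a $\upsilon$-space with an isolated point $x$. Since $\{x\}$ is then clopen in $X$, the space $X$ decomposes as the topological disjoint union $X = \{x\} \sqcup (X \setminus \{x\})$ of two compact Hausdorff subspaces. By the disjoint union theorem just proved for $\upsilon$-spaces, each summand must itself be a $\upsilon$-space; in particular $\{x\}$ would have to be a $\upsilon$-space. But the preceding example records that a one-point space is not a $\upsilon$-space, since $C_{R}(\{x\}) \cong \mathbb{R}$ admits $2^{2^{\aleph_{0}}}$ additive maps while only $2^{\aleph_{0}}$ of them are multiplications. This contradiction forces $X$ to have no isolated points.

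There is essentially no obstacle here: everything needed is already in place. The only thing worth stating carefully is why $\{x\}$ and $X\setminus\{x\}$ really exhibit $X$ as a disjoint union in the sense of the relevant definition, and this follows because an isolated point makes both pieces clopen, so the subspace topology on $X$ agrees with the disjoint union topology on $\{x\} \sqcup (X \setminus \{x\})$. The argument is therefore a two-line application of the disjoint union theorem plus the singleton example, exactly parallel to the $\eta$-space case.
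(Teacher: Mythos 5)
Your proof is correct and is exactly the argument the paper intends: it invokes the disjoint union theorem for $\upsilon$-spaces together with the singleton example, precisely mirroring the proof of the corresponding corollary for $\eta$-spaces (the paper itself only says the result follows "as in the $\eta$-space case"). Your added remark about why an isolated point yields a genuine clopen decomposition is a harmless and slightly more careful touch than the paper offers.
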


\bigskip

If we examine the proofs of Theorems in the preceding section, we immediately
obtain the following results.

\begin{theorem}
\label{fromsec2}Suppose $X$ is a compact Hausdorff space. Then

\begin{enumerate}
\item If the set of sequential limit points is dense in $X$, then $X$ is a
$\upsilon$-space.

\item If $X$ is first countable, then $X$ is a $\upsilon$-space if and only if
$X$ has no isolated points.

\item If $\left \{  K_{i}:i\in I\right \}  $ is a collection of closed subsets
of $X$ and if each $K_{i}$ is a $\upsilon$-space, then the closure of the
union of $K_{i}$'s is a $\upsilon$-space.

\item $X$ has a unique maximal compact $\nu$-subspace.

\item If $Y$ is a compact Hausdorff $\upsilon$-space, then $X\times Y$ is a
$\upsilon$-space.
\end{enumerate}
\end{theorem}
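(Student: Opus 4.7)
My plan is to route everything through Theorem~\ref{uCharacterization}, which reformulates ``$X$ is a $\upsilon$-space'' as ``every additive zero-preserving map on $C_R(X)$ is a multiplication''. With this reformulation, each of (1)--(5) becomes the direct analogue of a result from Section~2, and I would adapt those proofs by replacing ``additive local multiplication on $C(X)$'' with ``additive zero-preserving map on $C_R(X)$''.

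For (1), I would mimic the proof of Theorem~\ref{slp} restricted to real-valued functions. Take $T$ additive and zero-preserving on $C_R(X)$ with $T(1)=0$; $\mathbb{Q}$-linearity kills rational constants. For $a\in\mathbb{R}$ and a sequential limit point $x$, I would build the same Tietze extension $F$ with $F(x_n)=a-[10^n a]/10^n$ and $F(x)=0$ and run the same contradiction. The point to verify is that the \emph{real} portion of Theorem~\ref{slp} only uses zero-preservation (``$F(x)=0\Rightarrow T(F)(x)=0$'' and ``$(a-F-r_k)(x_k)=0\Rightarrow T(a-F-r_k)(x_k)=0$''), never the full identity $T(f)=h_f\cdot f$. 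Once $T(a)=0$ for every $a\in\mathbb{R}$, the identity $(g-g(x)\cdot 1)(x)=0$ forces $T(g)=0$ on $C_R(X)$.

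For (3), I would imitate Theorem~\ref{union} and check that zero-preservation suffices at each step. Define $T_i(f)=T(\tilde f)|_{K_i}$ for $f\in C_R(K_i)$, where $\tilde f$ is any Tietze extension; well-definedness is even more direct than in the original, since if $\tilde{f_1}-\tilde{f_2}$ vanishes on $K_i$, zero-preservation alone kills $T(\tilde{f_1}-\tilde{f_2})$ on $K_i$. Each $T_i$ is then a zero-preserving additive map on $C_R(K_i)$ with $T_i(1)=0$, so the hypothesis gives $T_i=0$, and density of $\bigcup_i K_i$ in its closure completes the argument. Statement (4) then follows by applying (3) to the family of all compact $\upsilon$-subspaces of $X$, and (5) follows by applying (3) to $X\times Y=\bigcup_{x\in X}\{x\}\times Y$, since each slice $\{x\}\times Y$ is homeomorphic to the $\upsilon$-space $Y$. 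Statement (2) drops out of (1) combined with the already-noted fact that a $\upsilon$-space has no isolated points: in a first countable space with no isolated points every point is a sequential limit point.

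The one thing that requires genuine checking, rather than mere transcription, is that each appeal to the identity $T(f)=h_f\cdot f$ in the Section~2 proofs can be replaced by the weaker zero-preserving implication. The nontrivial worry is the final step of Theorem~\ref{slp} involving $T(i)$, which did exploit the full local-multiplication property through the auxiliary function $H=F+iG$; but this step is avoided entirely, because Theorem~\ref{uCharacterization} lets the analysis take place on $C_R(X)$, where there is no imaginary part to chase. That is the essential payoff of reducing via Theorem~\ref{uCharacterization}, and once it is secured the five items follow with no further effort.
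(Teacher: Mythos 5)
Your proposal is correct and is essentially the paper's own argument: the paper's proof of Theorem~\ref{fromsec2} consists precisely of the remark that one re-examines the Section~2 proofs, and your reduction via Theorem~\ref{uCharacterization} to additive zero-preserving maps on $C_{R}(X)$, together with the observation that the real-valued portions of Theorems~\ref{slp} and~\ref{union} use only the implication $f(x)=0\Rightarrow T(f)(x)=0$ (never the full identity $T(f)=h_{f}\cdot f$), is exactly the verification the paper leaves to the reader. Your handling of the one genuinely delicate point --- that the $T(i)$ argument with $H=F+iG$ is bypassed entirely because the analysis lives in $C_{R}(X)$ --- is the right observation, and the derivations of (2), (4), and (5) from (1) and (3) match the paper's Section~2 pattern.
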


\section{P-points and $\mathfrak{q}$-points}

We now want to generalize Theorem \ref{slp}. We call a point $x\in X$ a
$q$\textbf{-point} if and only if there is a disjoint sequence $\left \{
K_{n}\right \}  $ of compact subsets of $X$ such that

\begin{enumerate}
\item Each $K_{n}$ is disjoint from the closure of $\cup_{m\in \mathbb{N},m\neq
n}K_{m}$;

\item $x\in \left(  \cup_{n\in \mathbb{N}}K_{n}\right)  ^{-}\backslash \left(
\cup_{n\in \mathbb{N}}K_{n}\right)  $.
\end{enumerate}

We say that $x$ is a \textbf{strong }$\mathfrak{q}$\textbf{-point} if there is
a disjoint sequence $\left \{  K_{n}\right \}  $ of compact sets satisfying

\begin{enumerate}
\item[3.] $x\in \left(  \cup_{n\in \mathbb{N}}K_{2n}\right)  ^{-}\cap \left(
\cup_{n\in \mathbb{N}}K_{2n-1}\right)  ^{-}$.
\end{enumerate}

It is clear that these conditions on the sequence $\left \{  K_{n}\right \}  $
is precisely what is needed to ensure that if $\left \{  z_{n}\right \}  $ is a
sequence of complex numbers converging to $z$, then the function $f$ on
$\cup_{n=1}^{\infty}K_{n}$ defined by $f|_{K_{n}}=z_{n}$ extends to $\left(
\cup_{n\in \mathbb{N}}K_{2n}\right)  ^{-}$, which by the Tietze extension
theorem extends to a continuous function on $X$. By examining the proof of
Theorem \ref{slp}, we easily obtain the following.

\begin{theorem}
\label{qp}Suppose $X$ is a compact Hausdorff space.

\begin{enumerate}
\item If the set of $\mathfrak{q}$-points is dense in $X$, then every additive
local multiplication $T$ on $X$ has the form $T\left(  f\right)  =T\left(
1\right)  \operatorname{Re}\left(  f\right)  +T\left(  i\right)
\operatorname{Im}\left(  f\right)  $ for some $T\left(  1\right)  $,$T\left(
i\right)  \in C\left(  X\right)  $.

\item If the set of strong $\mathfrak{q}$-points is dense in $X$, then $X$ is
an $\eta$-space.
\end{enumerate}
\end{theorem}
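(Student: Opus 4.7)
The plan is to transcribe the proof of Theorem \ref{slp} almost verbatim, replacing the sequential limit point $x$ together with its sequence $\{x_n\}$ by a $\mathfrak{q}$-point $x$ together with its compact witnesses $\{K_n\}$. The paragraph preceding the theorem already records the extension lemma that makes this substitution go through: if $z_n \to z$ in $\mathbb{C}$, then the function $f|_{K_n} = z_n$ is continuous on $\bigcup_n K_n$ (by condition (1), which separates each $K_n$ from the closure of the others), extends continuously to the closed set $\overline{\bigcup_n K_n}$ with $f(x) = z$ (by condition (2) or (3)), and therefore by Tietze extends to a continuous function on $X$. In every place where the original proof invokes Tietze on $\{x_n\}\cup\{x\}$, we will invoke this lemma instead.

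For part (1), let $T$ be an additive local multiplication. Subtracting $M_{T(1)}$ reduces us to the case $T(1) = 0$; additivity makes $T$ automatically $\mathbb{Q}$-linear, so $T(r) = 0$ for every rational $r$. Mimicking the first half of Theorem \ref{slp}, I would fix $a \in \mathbb{R}$, suppose for contradiction that $T(a)(y) \neq 0$ for some $y$, pick a $\mathfrak{q}$-point $x$ at which $T(a)(x)\neq 0$, and apply the extension lemma to
\[
f|_{K_n} = a - \frac{[10^n a]}{10^n}, \qquad f(x) = 0,
\]
producing $F \in C(X)$. The local-multiplication property forces $T(F)(x) = h_F(x)F(x) = 0$, so $T(a-F)(x) = T(a)(x) \neq 0$, whereas on each $K_n$ the function $a - F$ equals the rational constant $[10^n a]/10^n$, forcing $T(a-F) \equiv 0$ on $\bigcup_n K_n$. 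Since $x \in \overline{\bigcup_n K_n}$ and $T(a-F)$ is continuous, this is a contradiction. Thus $T(a) = 0$ for every real $a$. The remaining steps of Theorem \ref{slp} now transfer unchanged: for $g \in C_R(X)$ the identity $T(g - g(x))(x) = 0$ yields $T(g) = 0$, and applying the same conclusions to the additive local multiplication $L(f) := T(if)$ gives $T(ig) = T(i)g$, so $T(f) = T(1)\operatorname{Re}(f) + T(i)\operatorname{Im}(f)$.

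For part (2), we need in addition that $T(i) = 0$. Since every strong $\mathfrak{q}$-point is a $\mathfrak{q}$-point, part (1) already applies, and after reducing to $T(1) = 0$ we have $T(f) = T(i)\operatorname{Im}(f)$. Following the second half of Theorem \ref{slp}, fix a strong $\mathfrak{q}$-point $x$ with $\{K_n\}$ satisfying (3), and use the extension lemma to build $F, G \in C(X)$ with
\[
F|_{K_n} = \tfrac{1}{n}\bigl|\sin(n\pi/2)\bigr|, \qquad G|_{K_n} = \tfrac{1}{n}, \qquad F(x) = G(x) = 0.
\]
Setting $H = F + iG$ and using $T(H) = T(i)G = h_H \cdot H$ for some $h_H \in C(X)$, evaluation on each $K_n$ yields
\[
T(i)(z) = h_H(z)\bigl|\sin(n\pi/2)\bigr| + i h_H(z) \quad \text{for } z \in K_n.
\]
Condition (3) supplies nets from $\bigcup_m K_{2m}$ and from $\bigcup_m K_{2m-1}$ both converging to $x$; along the even indices the right-hand side becomes $ih_H(z)$ and along the odd indices it becomes $h_H(z) + ih_H(z)$, so by continuity $T(i)(x) = i h_H(x) = h_H(x) + i h_H(x)$, forcing $h_H(x) = 0$ and $T(i)(x) = 0$. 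Density of strong $\mathfrak{q}$-points then gives $T(i) = 0$, so $T = M_{T(1)}$ and $X$ is an $\eta$-space.

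The only genuinely new ingredient, and hence the main thing to verify, is the extension lemma itself; once one checks that conditions (1)--(2) (resp.\ (3)) really do guarantee the continuous prescription of values $z_n$ on $K_n$ extending to a function on $X$ with prescribed limiting value at $x$, the rest of the argument is a direct rewriting of Theorem \ref{slp}.
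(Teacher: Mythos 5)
Your proposal is correct and is exactly the argument the paper intends: the paper's own ``proof'' consists of the remark preceding the theorem (that conditions (1)--(2), resp.\ (3), on $\{K_n\}$ yield the needed Tietze-type extensions) followed by the instruction to re-examine the proof of Theorem \ref{slp}, which is precisely the substitution you carry out. You also correctly isolate the extension lemma as the one point requiring verification (condition (1) guarantees that the prescribed function is locally constant on each $K_n$ within $\overline{\bigcup_n K_n}$, and $z_n\to z$ handles continuity at the remaining points), and your split of the argument --- density of $\mathfrak{q}$-points giving the form $T(1)\operatorname{Re}f+T(i)\operatorname{Im}f$, with the interleaved even/odd closures of condition (3) needed only to force $T(i)=0$ --- matches the structure of the original proof.
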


\bigskip

The notion of a $\mathfrak{q}$-point is very closely related to the classical
notion of a P-point defined by L. Gillman and M. Henriksen \cite{GH}, which
can be defined as follows:

$x\in X$ is a \textbf{P-point} in $X$ if and only if every continuous function
in $C\left(  X\right)  $ is constant on some open set containing $x$.

\begin{lemma}
\label{pq}Suppose $X$ is a compact Hausdorff space. Then

\begin{enumerate}
\item If $x\in X$, then $x$ is a $\mathfrak{q}$-point if and only if $x$ is
not a P-point.

\item If $K\subseteq X$ is compact and every point of $K$ is a P-point of $X$,
then $K$ is finite.

\item If $A$ is the set of all $\mathfrak{q}$-points of $X$, then
$X\backslash \bar{A}$ is the set of isolated points of $X$.
\end{enumerate}
\end{lemma}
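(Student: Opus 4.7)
I treat the three parts in order, deducing (3) from (1) and (2).

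For (1) $(\Rightarrow)$, given a disjoint sequence $\{K_n\}$ witnessing that $x$ is a $\mathfrak{q}$-point, I apply the extension property recorded just before Theorem~\ref{qp} with $z_n = 1/n$ to produce $f \in C(X)$ with $f|_{K_n} = 1/n$ and $f(x) = 0$. Since every neighborhood of $x$ meets some $K_n$, $f$ is non-constant on every neighborhood of $x$, so $x$ is not a P-point. For (1) $(\Leftarrow)$, pick $f \in C(X)$ with $f(x) = 0$ and $f$ non-constant on every neighborhood of $x$; after replacing $f$ by $|f|$ I may assume $f \geq 0$, so $x \in \overline{\{f>0\}}$. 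The plan is to set $K_n = f^{-1}([t_{n+1},t_n])$ for a strictly decreasing sequence $t_n \searrow 0$ in $[0,1]$ with $t_n \notin f(X)$; then $f^{-1}(t_n) = \emptyset$ makes the $K_n$ pairwise disjoint, $\bigcup K_n = f^{-1}((0,t_1])$ meets every neighborhood of $x$ (using non-P-pointedness and continuity of $f$ at $x$), and the inclusion $\overline{\bigcup_{m \ne n} K_m} \subseteq f^{-1}([0,t_{n+1}] \cup [t_n,1])$ is disjoint from $K_n$. Such $t_n$ exist provided the complement of $f(X)$ has $0$ as an accumulation point; the main obstacle is the remaining case $f(X) \supseteq [0,\epsilon]$ for some $\epsilon > 0$, in which I instead take $K_n = f^{-1}(t_n)$ as level sets for $t_n \searrow 0$ in $(0,\epsilon)$ and choose $\{t_n\}$ carefully---for instance, as the values of $f$ on a countable dense subset of an open cozero-piece near $x$---so that every neighborhood of $x$ still meets $\bigcup K_n$.

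For (2), argue by contradiction. Suppose $K \subseteq X$ is compact, infinite, and consists entirely of P-points. Select distinct $y_n \in K$ and a cluster point $y \in K$ of $\{y_n\}$ (which exists by compactness of $K$); after dropping a term, $y \ne y_n$ for every $n$. By Urysohn's lemma choose $f_n \in C(X)$ with $f_n(y) = 0$, $f_n(y_n) = 1$, and $0 \leq f_n \leq 1$, and set $f = \sum_{n \geq 1} 2^{-n} f_n^2$, which converges uniformly to a continuous function with $f(y) = 0$ and $f(y_n) \geq 2^{-n} > 0$ for every $n$. Since $y$ is a P-point, $f$ is constant, hence identically $0$, on some open neighborhood $V$ of $y$; but $V$ contains $y_n$ for infinitely many $n$, contradicting $f(y_n) > 0$.

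For (3), combine (1) and (2). If $x \in X \setminus \overline{A}$, by normality pick open $V$ with $x \in V \subseteq \overline{V} \subseteq X \setminus A$; then $\overline{V}$ is compact with no $\mathfrak{q}$-points, so by (1) consists entirely of P-points, and by (2) is finite. A finite open neighborhood $V$ of $x$ in a Hausdorff space is discrete, so $\{x\}$ is open in $X$ and $x$ is isolated. Conversely, if $x$ is isolated, every $f \in C(X)$ is constant on the neighborhood $\{x\}$, so $x$ is a P-point and by (1) not a $\mathfrak{q}$-point; since $\{x\}$ is open and $\{x\} \cap A = \emptyset$, we have $x \notin \overline{A}$.
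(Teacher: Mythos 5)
Your parts (1)($\Rightarrow$), (2) and (3) are sound. Your proof of (2) is actually more self-contained than the paper's: the paper reduces to P-points of $K$ via Tietze and then cites Proposition 4.1 of Misra for the finiteness of a compact space consisting of P-points, whereas your $\omega$-accumulation-point argument with $f=\sum_n 2^{-n}f_n^2$ proves it from scratch. Part (3) is essentially the paper's argument (the paper manufactures the compact neighborhood with a Urysohn function where you invoke regularity).

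The genuine gap is in (1)($\Leftarrow$). Your first construction, $K_n=f^{-1}([t_{n+1},t_n])$ with $t_n\searrow 0$ and $t_n\notin f(X)$, is correct when such $t_n$ exist, and it neatly avoids any parity trick because the omitted values already separate consecutive bands. But it does not apply when $f(X)\supseteq[0,\epsilon]$ --- e.g. $X=[0,1]$, $f(t)=t$, $x=0$ --- and your fallback for that case is not a proof. Taking level sets $K_n=f^{-1}(t_n)$ requires a countable set $T=\{t_n\}$ of values of $f$ that is simultaneously (a) discrete in $(0,1]$ with $0$ as its only accumulation point, so that each $K_n$ is disjoint from the closure of the union of the others, and (b) such that $x\in\overline{f^{-1}(T)}$. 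Your suggestion --- take $T=f(D)$ for $D$ a countable dense subset of a cozero piece near $x$ --- fails on both counts: open subsets of compact Hausdorff spaces need not be separable, and even when such a $D$ exists, $f(D)$ is typically dense in an interval, so thinning it to achieve (a) destroys (b). The paper closes exactly this hole with the band construction $E_n=f^{-1}\left(\left[\tfrac{1}{n+1},\tfrac{1}{n}\right]\right)$: the union of all bands is the whole cozero set of $f$, so $x$ is automatically in its closure, and then one passes to either the even-indexed or the odd-indexed bands --- $x$ lies in the closure of at least one of the two subunions, and skipping every other band restores the required separation, with no assumption on the range of $f$. You need this (or an equivalent device) to finish the reverse implication.
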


\begin{proof}
$\left(  1\right)  $. Suppose $x$ is a $\mathfrak{q}$-point. We can choose a
disjoint collection $\left \{  K_{1},K_{2},\ldots \right \}  $ of compact sets
such that each is disjoint from the union of the others, $x\in \left(
\cup_{n=1}^{\infty}K_{n}\right)  ^{-}\backslash \left(  \cup_{n=1}^{\infty
}K_{n}\right)  $. We can define a continuous function $f:\left(  \cup
_{n=1}^{\infty}K_{n}\right)  ^{-}\rightarrow \left[  0,1\right]  $ so that
$f\left(  a\right)  =1/n$ when $a\in K_{n}$ and $f\left(  a\right)  =0$ on
$\left(  \cup_{n=1}^{\infty}K_{n}\right)  ^{-}\backslash \left(  \cup
_{n=1}^{\infty}K_{n}\right)  $. By the Tietze extension theorem, we can assume
$f\in C\left(  X\right)  $. It is clear that $f\left(  x\right)  =0$ but there
is no neighborhood of $x$ on which $f$ is $0$. Thus $x$ is not a P-point.

Conversely, suppose $x$ is not a P-point and suppose $g\in C\left(  X\right)
$ and $g\left(  x\right)  =0$ but $g$ is not $0$ on a neighborhood of $x$.
Thus
\[
x\in \left[  X\backslash Z\left(  g\right)  \right]  ^{-}.
\]
By replacing $g$ with $\left \vert g\right \vert /\left(  1+\left \vert
g\right \vert \right)  $ we can assume that $0\leq g\leq1$. For each
$n\in \mathbb{N}$, let $E_{n}=\left \{  x\in X:\frac{1}{n+1}\leq g\left(
x\right)  \leq \frac{1}{n}\right \}  $. We know that $x$ is in the closure of
the union of the $E_{n}$'s. Thus $x$ is either in the union of the closure of
$\cup_{n\in \mathbb{N}}E_{2n}$ or the closure of $\cup_{n\in \mathbb{N}}%
E_{2n-1}$. In the former case we let $K_{n}=E_{2n}$ for each $n$, and in the
latter case we let $K_{n}=E_{2n-1}$ for each $n$. In either case we see that
$x$ is a $\mathfrak{q}$-point.

$\left(  2\right)  .$ Suppose $K\subseteq X$ is compact and every point of $K$
is a P-point of $X.$ It follows from the Tietze extension theorem that every
point of $K$ is a P-point of $K$. It follows from Proposition 4.1 in \cite{M}
that $K$ is finite.

$\left(  3\right)  $. Let $E$ be the closure of the set of all $\mathfrak{q}%
$-points of $X$. Suppose $x_{0}\in X\backslash E$. By Urysohn's lemma we can
find a continuous $h:X\rightarrow \left[  0,1\right]  $ such that $h\left(
x_{0}\right)  =1$ and $h|_{E}=0$. Thus $\left \{  x\in X:h\left(  x\right)
\geq1/2\right \}  $ is a compact subset of $X$ for which every point is a
P-point of $X$, which means it is finite. Thus $\left \{  x\in X:g\left(
x\right)  >1/2\right \}  $ is a finite open set containing $x_{0}$. Hence
$x_{0}$ is an isolated point of $X$. Also no isolated point is in $E$, so
$X\backslash E$ is the set of isolated points of $X$.
\end{proof}

\bigskip

\section{Main Results}

We can now give complete characterizations of $\eta$-spaces, real $\eta
$-spaces and $\upsilon$-spaces. Here is our first main theorem, which shows
that being a real $\eta$-space and being a $\upsilon$-space are the same as
having no isolated points.

\begin{theorem}
\label{main}Suppose $X$ is a compact Hausdorff space. Then the following are equivalent:

\begin{enumerate}
\item $X$ is a $\upsilon$-space.

\item $X$ is a real $\eta$-space.

\item The set of $\mathfrak{q}$-points of $X$ is dense in $X$.

\item $X$ has no isolated points.
\end{enumerate}
\end{theorem}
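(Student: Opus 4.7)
The plan is to prove the chain $(1)\Rightarrow(2)\Rightarrow(4)\Rightarrow(3)\Rightarrow(1)$. The first three implications are essentially bookkeeping; the real content is $(3)\Rightarrow(1)$, where I would rerun the proof of Theorem \ref{slp} with sequential limit points replaced by $\mathfrak{q}$-points and ``local multiplication'' weakened to ``zero-preserving,'' working throughout on $C_{R}(X)$.

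For $(1)\Rightarrow(2)$, note that any additive local multiplication on $C_{R}(X)$ is in particular a zero-preserving additive map on $C_{R}(X)$, so by the definition of $\upsilon$-space (equivalently condition (5) of Theorem \ref{uCharacterization}) it is a multiplication. For $(2)\Rightarrow(4)$ I argue contrapositively: if $x_{0}$ is isolated, then $X=\{x_{0}\}\sqcup(X\setminus\{x_{0}\})$, and the obvious real analogue of Theorem \ref{direct} (same proof) reduces to whether $\{x_{0}\}$ is a real $\eta$-space. Any $\mathbb{Q}$-linear but not $\mathbb{R}$-linear endomorphism $\phi$ of $\mathbb{R}$ yields an additive local multiplication $T$ on $C_{R}(X)$ via $T(f)(x_{0})=\phi(f(x_{0}))$ and $T(f)(x)=0$ elsewhere (continuity is free since $x_{0}$ is isolated), and $T$ is not a multiplication; hence $X$ is not a real $\eta$-space. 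For $(4)\Rightarrow(3)$ I would simply cite Lemma \ref{pq}(3).

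The substantive step is $(3)\Rightarrow(1)$. Let $T$ be an additive zero-preserving map on $C_{R}(X)$; as usual, replacing $T$ by $T-M_{T(1)}$, I may assume $T(1)=0$, and $\mathbb{Q}$-linearity gives $T(r)=0$ for all $r\in\mathbb{Q}$. I will first show $T(a)=0$ for every $a\in\mathbb{R}$. Fix a $\mathfrak{q}$-point $x$ and a disjoint sequence $\{K_{n}\}$ of compact sets witnessing it; as noted in the paragraph preceding Theorem \ref{qp}, the function
\[
f(y)=a-\tfrac{[10^{n}a]}{10^{n}}\text{ for }y\in K_{n},\qquad f(y)=0\text{ for }y\in\bigl(\textstyle\bigcup_{n}K_{n}\bigr)^{-}\setminus\bigcup_{n}K_{n},
\]
is well-defined and continuous on the closure (any net in $\bigcup K_{n}$ escaping every finite union has indices tending to $\infty$), and extends by Tietze to $F\in C_{R}(X)$ with $F(x)=0$. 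Since $T$ is zero-preserving, $T(F)(x)=0$, so $T(a-F)(x)=T(a)(x)$. If $T(a)(x)\neq 0$, then by continuity of $T(a-F)$ there is a neighborhood $U$ of $x$ on which $T(a-F)$ is nonzero; as $x\in(\bigcup K_{n})^{-}$, $U$ meets some $K_{k}$ at a point $y$. But $(a-F-r_{k})(y)=0$ where $r_{k}=[10^{k}a]/10^{k}\in\mathbb{Q}$, so zero-preservation gives $T(a-F-r_{k})(y)=0$, hence $T(a-F)(y)=T(r_{k})(y)=0$, a contradiction. Thus $T(a)$ vanishes on the dense set of $\mathfrak{q}$-points, and by continuity $T(a)\equiv 0$. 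Finally, for arbitrary $g\in C_{R}(X)$ and $x\in X$, write $g=(g-g(x))+g(x)\cdot 1$; applied at $x$, zero-preservation kills the first summand and what was just proved kills the second, so $T(g)(x)=0$. Hence $T=0$, so $X$ is a $\upsilon$-space.

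The main obstacle is ensuring the extension of $f$ across the limit point $x$ is continuous — this is exactly the combinatorial condition built into the definition of a $\mathfrak{q}$-point (disjointness from the closure of the union of the remaining $K_{m}$'s), and it is what makes $\mathfrak{q}$-points the correct generalization of sequential limit points for this style of Tietze-based argument.
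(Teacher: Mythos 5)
Your proposal is correct and follows essentially the same route as the paper: the same cycle of implications, with $(4)\Rightarrow(3)$ from Lemma \ref{pq}(3), $(1)\Rightarrow(2)$ from the definition of $\upsilon$-space, $(2)\Rightarrow(4)$ via the isolated-point/singleton obstruction, and $(3)\Rightarrow(1)$ by rerunning the Tietze argument of Theorem \ref{slp} with $\mathfrak{q}$-points in place of sequential limit points. You merely make explicit two steps the paper delegates to earlier results (using a concrete $\mathbb{Q}$-linear non-$\mathbb{R}$-linear $\phi$ instead of the paper's cardinality count, and writing out the $(3)\Rightarrow(1)$ adaptation that the paper attributes to Theorem \ref{qp}), and your details check out.
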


\begin{proof}
We already proved that $\left(  3\right)  \Rightarrow \left(  1\right)
\Rightarrow \left(  2\right)  \Rightarrow \left(  4\right)  $. The proof of
$\left(  4\right)  \Rightarrow \left(  3\right)  $ follows from part $\left(
3\right)  $ of Lemma \ref{pq}.
\end{proof}

\bigskip

The following result characterizes $\eta$-spaces.

\begin{theorem}
\label{main2}Suppose $X$ is a compact Hausdorff space. The the following are equivalent:

\begin{enumerate}
\item $X$ is an $\eta$-space.

\item No nonempty open F$_{\sigma}$ set in $X$ is an F-space.

\item $X$ has no isolated points and, for every $0\neq g\in C\left(  X\right)
$, the map $T\left(  f\right)  =g\bar{f}$ is \textbf{not} a local multiplication.
\end{enumerate}
\end{theorem}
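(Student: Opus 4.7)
My plan is to prove the three implications $(1) \Rightarrow (3)$, $(3) \Rightarrow (2)$, and $(2) \Rightarrow (1)$, of which only the last requires genuine content; the other two will be essentially bookkeeping using results already in the excerpt.

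For $(1) \Rightarrow (3)$, I would assume $X$ is an $\eta$-space. The earlier corollary (\emph{An $\eta$-space has no isolated points}) disposes of the first half of $(3)$. For the second half, suppose for contradiction that some $0 \neq g \in C(X)$ makes $T(f) = g\bar f$ a local multiplication. This $T$ is additive, so by the $\eta$-space assumption $T$ must equal $M_{T(1)} = M_g$. Evaluating the identity $g\bar f = gf$ at the constant function $f = i$ yields $-gi = gi$, forcing $g = 0$, a contradiction. The implication $(3) \Rightarrow (2)$ is then immediate from Theorem \ref{gfbar}, whose negation is precisely the equivalence between the existence of such a nonzero $g$ and the existence of a nonempty open F$_\sigma$ F-subspace.

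The main step is $(2) \Rightarrow (1)$. First I would observe that $(2)$ forces $X$ to have no isolated points, since any isolated $x$ would make the singleton $\{x\}$ a nonempty open F$_\sigma$ that is trivially an F-space. By Theorem \ref{main}, $X$ is then a $\upsilon$-space. Now let $T$ be an arbitrary additive local multiplication on $C(X)$. Since every local multiplication is zero-preserving, the $\upsilon$-space property yields
\[
T(f) = T(1)\operatorname{Re}(f) + T(i)\operatorname{Im}(f) = af + b\bar f,
\]
where $a = \tfrac{1}{2}(T(1) - iT(i))$ and $b = \tfrac{1}{2}(T(1) + iT(i))$ both lie in $C(X)$. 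The multiplication $M_a$ is in particular a local multiplication, and local multiplications on the commutative algebra $C(X)$ are closed under differences, so $f \mapsto b\bar f = T(f) - M_a(f)$ is a local multiplication. By Theorem \ref{gfbar} combined with $(2)$, this forces $b = 0$, whence $T = M_a$ is a multiplication.

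The step I expect to be most delicate is the appeal to the $\upsilon$-space property at the start of $(2) \Rightarrow (1)$, which is exactly where the heavy lifting of Section 3 (and ultimately Theorem \ref{main}) gets used to bridge the gap between additive and $\mathbb{R}$-linear: without it one would have no way to reduce a general additive local multiplication to something involving only $f$ and $\bar f$. Once that reduction is made, Theorem \ref{gfbar} is the tailor-made tool for handling the conjugation summand, and the rest is algebra.
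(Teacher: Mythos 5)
Your proof is correct and uses exactly the paper's ingredients (Lemma \ref{oldgfbar}/Theorem \ref{gfbar}, Theorem \ref{main} to get the $\upsilon$-space property, and the decomposition $T(f)=af+b\bar f$), merely traversing the cycle in the opposite order and inlining the content of Corollary \ref{rl} rather than citing it. This is essentially the same argument as the paper's.
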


\begin{proof}
$\left(  1\right)  \Rightarrow \left(  2\right)  $: If $X$ is an $\eta$-space,
it follows from Theorem \ref{gfbar} that no nonempty open F$_{\sigma}$ set in
$X$ is an F-space.

$\left(  2\right)  \Rightarrow \left(  3\right)  $: Suppose that no nonempty
open F$_{\sigma}$ set in $X$ is an F-space. It follows that $X$ has no
isolated points. Also, by Lemma \ref{oldgfbar}, for every $0\neq g\in C\left(
X\right)  $, the map $T\left(  f\right)  =g\bar{f}$ is not a local multiplication.

$\left(  3\right)  \Rightarrow \left(  1\right)  $: By Theorem \ref{main}, $X$
is an $\upsilon$-space. Suppose $T$ is an additive local multiplication on
$X.$ Since $X$ is an $\upsilon$-space, $T$ must have the form%
\[
T\left(  f\right)  =T\left(  1\right)  \operatorname{Re}\left(  f\right)
+T\left(  i\right)  \operatorname{Im}\left(  f\right)  .
\]
Thus $T$ is $\mathbb{R}$-linear. It follows from Corollary \ref{rl} that $T$
is a multiplication.
\end{proof}

\bigskip

We now describe how the maximal subsets of a compact Hausdorff space $X$ that
are $\eta$-spaces or $\upsilon$-spaces can be constructed.

\begin{remark}
\label{transfinite}One might guess that the maximal $\upsilon$-subspace of $X$
is the closure of the set of $\mathfrak{q}$-points of $X$. However, if
$X=\left \{  0,1,1/2,1/3,1/4,\ldots \right \}  $ then the closure of the set of
$\mathfrak{q}$-points of $X$ is precisely $\left \{  0\right \}  $. However,
this is not a $\mathfrak{u}$-space. We have to use a transfinite construction
argument. If $K$ is a compact Hausdorff space we define
\[
\mathfrak{n}\left(  K\right)  =K\backslash \left \{  x\in K:x\text{ is an
isolated point}\right \}
\]%
\[
=\left \{  x\in K:x\text{ is a }\mathfrak{q}\text{-point of }K\right \}
^{-}\text{.}%
\]
We let $E_{0}=X$, and suppose $\alpha>0$ is an ordinal such that, for all
$\beta>\alpha$, $E_{\beta}$ is defined. We define $E_{\alpha}$ by%
\[
E_{\alpha}=\left \{
\begin{array}
[c]{cc}%
\mathfrak{n}\left(  E_{\beta}\right)  & \text{if }\alpha=\beta+1\\
\cap_{\beta<\alpha}E_{\beta} & \text{if }\alpha \text{ is a limit ordinal}%
\end{array}
\right.  .
\]
Since $\left \{  E_{\beta}\backslash E_{\beta+1}:\beta \text{ is an
ordinal}\right \}  $ is a disjoint collection of subsets of $X$, there is a
smallest ordinal $\alpha$ such that $E_{\alpha}=E_{\alpha+1}$. It is clear
that if $K$ is a compact subset of $X$ having no isolated points, then
$K\subset E_{\beta}$ for every ordinal $\beta$; in particular, $K\subset
E_{\alpha}$. Since $E_{\alpha}=E_{\alpha+1}=\mathfrak{n}\left(  E_{\alpha
}\right)  $, $E_{\alpha}$ has no isolated points. Thus $E_{\alpha}$ is the
maximal compact subset of $X$ that has no isolated points, i.e., the maximal
compact subset of $X$ that is a $\upsilon$-space.

The maximal $\eta$-subspace is a little more complicated. First suppose $Y$ is
a compact subset of $X$ and $Y$ is an $\eta$-space. Suppose also that $V$ is
an open $F_{\sigma}$ subset of $X$ and that $V$ is an F-space. It follows from
Lemma \ref{oldgfbar} that there is a $g\in C\left(  X\right)  $ such that
$X\backslash Z\left(  g\right)  =V$ and the map $T\left(  f\right)  =g\bar{f}$
is a local multiplication on $C\left(  X\right)  $. If $g_{Y}=g|_{Y}\neq0$,
then the map $S\left(  f\right)  =g_{Y}\bar{f}$ is a local multiplication on
$C\left(  Y\right)  $ that is not a multiplication. Since $Y$ is an $\eta
$-space, $g|_{Y}=0,$ or $Y\cap V=\varnothing$.

We now imitate the process above. If $K$ is a compact Hausdorff space, define%
\[
\mathfrak{m}\left(  K\right)  =K\backslash \cup \left \{  V\subset X:V\text{ is
an open F}_{\sigma}\text{ and an F-space}\right \}  \text{.}%
\]
We define $F_{0}=X$, and for $\alpha>0$ define%
\[
F_{\alpha}=\left \{
\begin{array}
[c]{cc}%
\mathfrak{m}\left(  F_{\beta}\right)  & \text{if }\alpha=\beta+1\\
\cap_{\beta<\alpha}F_{\beta} & \text{if }\alpha \text{ is a limit ordinal}%
\end{array}
\right.
\]
Then we can choose $\alpha$ to be the smallest ordinal such that $F_{\alpha
}=F_{\alpha+1}$. Then $F_{\alpha}$ is the maximal compact $\eta$-subspace of
$X$.
\end{remark}

We conclude with remarks on the space $\beta \left(  \mathbb{N}\right)
\backslash \mathbb{N}$.

\begin{remark}
Since $\beta \left(  \mathbb{N}\right)  \backslash \mathbb{N}$ is an F-space, we
know that $\beta \left(  \mathbb{N}\right)  \backslash \mathbb{N}$ is not an
$\eta$-space. However, $\beta \left(  \mathbb{N}\right)  \backslash \mathbb{N}$
has no isolated points. Thus, by Theorem \ref{main}, $X$ is an $\upsilon
$-space. Thus every local multiplication on $C_{R}\left(  X\right)  $ is a
multiplication, but the same is not true for $C\left(  X\right)  $.
\end{remark}

\begin{remark}
Another interesting fact is that the question of whether every point in
$\beta \left(  \mathbb{N}\right)  \backslash \mathbb{N}$ is a $q$-point is
independent from the axioms of set theory (ZFC). W. Rudin \cite{R} proved that
if you assume the continuum hypothesis, then $\beta \left(  \mathbb{N}\right)
\backslash \mathbb{N}$ contains a P-point. Later S. Shelah (see \cite{W})
proved that there is a model of set theory in which every point of
$\beta \left(  \mathbb{N}\right)  /\mathbb{N}$ is a $\mathfrak{q}$-point.
\end{remark}

\end{document}